\newtheorem{thm}{Theorem}[section]
\newtheorem{lemma}[thm]{Lemma}
\newtheorem{prop}[thm]{Proposition}
\theoremstyle{definition}
\newtheorem{definition}[thm]{Definition}
\theoremstyle{remark}
\newtheorem{remark}[thm]{Remark}
\newtheorem*{claim}{Claim}
\def\mathcs{C^{*}}
\newcommand{\cs}{\ensuremath{\mathcs}}
\DeclareMathSymbol{\rtimes}{\mathbin}{AMSb}{"6F}
\def\K{\mathcal{K}}
\DeclareMathOperator*{\supp}{supp}
\DeclareMathOperator{\Aut}{Aut}
\DeclareMathOperator{\id}{id}
\def\set#1{\{\,#1\,\}}
\newcommand\sset[1]{\{#1\}}
\def\restr#1{|_{{#1}}}
\def\labelenumi{\textnormal{(\@alph\c@enumi)}}
\def\theenumi{\@alph \c@enumi}
\def\labelenumii{\textnormal{(\@roman\c@enumii)}}
\def\theenumii{\@roman \c@enumii}
\def\alphapart#1{\charno=96
\advance\charno by#1\char\charno}
\def\<{\langle}
\def\>{\rangle}
\let\ipscriptstyle=\scriptscriptstyle
\def\lipsqueeze{{\mskip -3.0mu}}
\def\ripsqueeze{{\mskip -3.0mu}}
\def\ipcomma{\nobreak\mathrel{,}\nobreak}
\newbox\ipstrutbox
\def\ipstrut{\copy\ipstrutbox}
\def\lip#1<#2,#3>{\mathopen{\relax_{\ipstrut\ipscriptstyle{
#1}}\lipsqueeze
\langle} #2\ipcomma #3 \rangle}
\def\blip#1<#2,#3>{\mathopen{\relax_{\ipstrut
\ipscriptstyle{ #1}}\lipsqueeze\bigl\langle} #2\ipcomma #3 \bigr\rangle}
\def\rip#1<#2,#3>{\langle #2\ipcomma #3
\rangle_{\ripsqueeze\ipstrut\ipscriptstyle{#1}}}
\def\brip#1<#2,#3>{\bigl\langle #2\ipcomma #3
\bigr\rangle_{\ripsqueeze\ipstrut\ipscriptstyle{#1}}}
\def\angsqueeze{\mskip -6mu}
\def\smangsqueeze{\mskip -3.7mu}
\def\trip#1<#2,#3>{\langle\smangsqueeze\langle #2\ipcomma #3
\rangle\smangsqueeze\rangle_{\ripsqueeze\ipstrut\ipscriptstyle{#1}}}
\def\btrip#1<#2,#3>{\bigl\langle\angsqueeze\bigl\langle #2\ipcomma
#3
\bigr\rangle
\angsqueeze\bigr\rangle_{\ripsqueeze\ipstrut\ipscriptstyle{#1}}}
\def\tlip#1<#2,#3>{\mathopen{\relax_{\ipstrut\ipscriptstyle{
#1}}\lipsqueeze \langle\smangsqueeze\langle} #2\ipcomma #3
\rangle\smangsqueeze\rangle}
\def\btlip#1<#2,#3>{\mathopen{\relax_{\ipstrut\ipscriptstyle{
#1}}\lipsqueeze
\bigl\langle\angsqueeze\bigl\langle} #2\ipcomma #3
\bigr\rangle\angsqueeze\bigr\rangle}
\def\ip(#1|#2){(#1\mid #2)}
\def\bip(#1|#2){\bigl(#1 \mid #2\bigr)}
\def\Bip(#1|#2){\Bigl( #1 \bigm| #2 \Bigr)}
\newcommand\groupfont{\mathcal}
\newcommand\bundlefont{\mathscr}
\renewcommand\lg{\lambda}
\newcommand\lgb{\lambda_{\Gu}}
\newcommand\lh{\sigma}
\newcommand\Gu{\underline{G}}
\newcommand\Hu{\underline{H}}
\newcommand\Eu{\underline{E}}
\newcommand\go{G^{(0)}}
\newcommand\ho{H^{(0)}}
\newcommand\eo{\Eu^{(0)}}
\newcommand\Ggr{\groupfont{G}}
\newcommand\Hgr{\groupfont{H}}
\newcommand\betab{\bar\beta}
\newcommand\alphab{\bar\alpha}
\newcommand{\inv}{^{-1}}
\DeclareMathOperator{\Iso}{Iso}
\DeclareMathOperator{\Rep}{Rep}
\renewcommand\L{\mathcal{L}}
\newcommand\unit{^{(0)}}
\newcommand\half{\frac12}
\def\neghalf(#1){(#1)^{-\half}}
\def\poshalf(#1){(#1)^{\half}}
\newcommand\A{\bundlefont{A}}
\newcommand\B{\bundlefont{B}}
\newcommand\CC{\bundlefont{C}}
\def\sa_#1(#2,#3){\Gamma_{#1}(#2;#3)}
\newcommand\rt{\operatorname{rt}}
\renewcommand\H{\mathcal{H}}
\newcommand\HH{\bundlefont{H}}
\newcommand\KK{\bundlefont{K}}
\begin{document}
\title[Iterated Groupoid Crossed Products]{Groupoid Equivalence and
  the Associated Iterated Crossed Product} 

\author[J. H. Brown]{Jonathan Henry Brown}
\address{Department of Mathematics and Statistics, University of
  Otago, P.O. Box 56, Dunedin 9054 New Zealand}
\email{jonathan.henry.brown@gmail.com}

\author[G. Goehle]{Geoff Goehle}
\address{Mathematics and Computer Science Department, Stillwell 426,
  Western Carolina University, Cullowhee, NC 28723}
\email{grgoehle@email.wcu.edu}

\author[D. P. Williams]{Dana P. Williams}
\address{Department of Mathematics, 6188 Kemeny Hall, Dartmouth
  College, Hanover, NH 03755}
\email{dana.williams@dartmouth.edu}

\subjclass[2000]{46L55, 47L65, 22A22}
\keywords{groupoids, crossed products, equivalence theorem}
\begin{abstract}
  Given groupoids $G$ and $H$ and a $(G,H)$-equivalence $X$ we may form
  the transformation groupoid $G\ltimes X\rtimes H$.  Given a
  separable groupoid dynamical system $(A,G\ltimes X\rtimes H,\omega)$
  we may restrict $\omega$ to an action of $G\ltimes X$ on $A$ and
  form the crossed product $A\rtimes G\ltimes X$. We show that there
  is an action of $H$ on $A\rtimes G\ltimes X$ and that the iterated
  crossed product $(A\rtimes G\ltimes X)\rtimes H$ is naturally
  isomorphic to the crossed product $A\rtimes (G\ltimes X\rtimes H)$.
\end{abstract}

\date{10 June 2012}
\maketitle


\section{Introduction}
\label{sec:introduction}

If $\alpha:\Ggr\to\Aut A$ and $\beta:\Hgr\to\Aut A$ are commuting
actions of locally compact groups $\Ggr$ and $\Hgr$ on a \cs-algebra
$A$, then we trivially obtain an action
$\alpha\times\beta:\Ggr\times\Hgr\to\Aut A$.  Furthermore, every
action of $\Ggr\times\Hgr$ arises in this way.  It is straightforward
to check that the
crossed product $A\rtimes_{\alpha\times\beta}(\Ggr\times\Hgr)$
decomposes (up to isomorphism) as
$(A\rtimes_{\alpha}\Ggr)\rtimes_{\betab}H$, where
$\betab:=\beta\rtimes 1$ is the associated action of $\Hgr$ on
$A\rtimes_{\alpha}\Ggr$.

Recently, we discovered we needed a version of this iterated crossed
product result for groupoid dynamical systems.  However, it is far
from clear just what form such a general result would take.  For
example, groupoids act on fibred objects, so $A$ would need to be
fibred over the unit spaces of both groupoids.  Even then, it is not
so obvious what it should mean for the actions to commute.

Rather than sort out the most general possible theorem, we opted to
let our applications dictate the form of our result.
 In particular, we want to consider locally compact
Hausdorff groupoids $G$ and $H$ that are equivalent via a
$(G,H)$-equivalence $X$.  Then we can form the (bi)transformation
groupoid $G\ltimes X\rtimes H$ which naturally contains the
transformation groupoids $G\ltimes X$ and $X\rtimes H$ as
subgroupoids.  (To see how this set-up relates to commuting actions of
groups, see Remark~\ref{rem-group-case}.)  Given a groupoid dynamical
system $(A,G\ltimes X\rtimes H,\omega)$, we can get actions $\alpha$
of $G\ltimes X$ and $\beta$ of $X\rtimes H$ via restriction.  It turns
out that the crossed product $A\rtimes_{\alpha}(G\ltimes X)$ naturally
fibres over the orbit space $(G\ltimes X)\backslash X$.  Since the
latter equals $G\backslash X$ and since $G\backslash X$ is
homeomorphic to $\ho$ because $X$ is a $(G,H)$-equivalence, it is not
surprising that $\beta$ induces an action $\betab$ of $H$ on
$A\rtimes_{\alpha}(G\ltimes X)$.  Then our main result
(Theorem~\ref{thm:isomorphism}) states that the iterated crossed
product
\begin{equation*}
  \bigl(A\rtimes_{\alpha}(G\ltimes X)\bigr)\rtimes_{\betab}H
\end{equation*}
is naturally isomorphic to the crossed product $A\rtimes_\omega(G\ltimes
X\rtimes H)$.

Although such a result might be expected, especially in analogy with
the group case, it is nontrivial to prove.  Some of the
subtleties are foreshadowed by the proof of the ``standard
result'' in the group case.  Even in that setting, one needs to work
with covariant representations.  Of course covariant representations
of groupoid crossed products are considerably more subtle than their
classical counterparts, and come with an (un-)healthy dose of measure
theory.

Before we plunge into the details, we feel obligated to say a bit
about our main application.  In \cite{brogoe:xx12}, the first two authors
introduce the Brauer semigroup $S(G)$ of a locally compact groupoid
$G$.  The Brauer semigroup is a natural outgrowth of the Brauer group
$\operatorname{Br}(G)$ introduced in \cite{kmrw:ajm98} and consists of
Morita equivalence classes $[A,\alpha]$ of groupoid dynamical
$G$-systems with a suitable multiplication.  For example, if
$G=\Ggr\times X$ is the transformation groupoid associated to the
transformation \emph{group} $(\Ggr,X)$, then $S(G)$ is (isomorphic to)
the equivariant Brauer semigroup $S_{\Ggr}(X)$ of \cite{hrw:tams00}.
The main theorem in \cite{brogoe:xx12} is the full groupoid dynamical
system analogue of \cite{hrw:tams00}*{Theorem~5.2} and asserts that if
$G$ and $H$ are equivalent groupoids, then there is a semigroup
isomorphism $\Theta$ from $S(H)$ onto $S(G)$.  A critical additional
ingredient from  \cite{hrw:tams00} is that $\Theta$
is constructed such that if $\Theta([B,\beta])=[A,\alpha]$, then the
crossed products $B\rtimes_{\beta}H$ and $A\rtimes_{\alpha}G$ are
Morita equivalent. 

The proof in \cite{brogoe:xx12} is modeled on the proof in
\cite{hrw:tams00} and it goes as follows.  Suppose that $\mathfrak c$ is
a class in $S(H)$.  Then there is an associated dynamical system
$(A,G\ltimes X\rtimes H,\omega)$ such that the following statements
can be verified.  The action $\alpha=\omega\restr{G\ltimes X}$ is
proper and saturated as defined in \cite{bro:jot12}.  Consequently, we
can form the generalized fixed point algebra $A^{\alpha}$ which is
Morita equivalent to $A\rtimes_{\alpha}G$.  Furthermore, $A^{\alpha}$
is fibred over $G\backslash X\cong \ho$, and $\beta$ induces an action
$\hat \beta$ of $H$ on $A^{\alpha}$.  The point is that the
construction of $(A,G\ltimes X\rtimes H,\omega)$ is such that our class
$\mathfrak c$ must be of the form $[A^{\alpha},\hat \beta]$, and such
that there
is a well-defined map $\Theta$ sending $\mathfrak c$ to the class
$[A^{\beta},\hat\alpha]$ (with $A^{\beta}$ and $\hat\alpha$ defined
analogously to $A^{\alpha}$ and $\hat\beta$).  An application of the
Equivalence Theorem \cite{muhwil:nyjm08}*{Theorem~5.5} (as in
\cite{muhwil:nyjm08}*{\S9.1}) shows that
$A^{\alpha}\rtimes_{\hat\beta}H$ is Morita equivalent to
$(A\rtimes_{\alpha}(G\ltimes X))\rtimes_{\betab}H$.  Employing
symmetric arguments, we also have $A^{\beta}\rtimes_{\hat\alpha}G$
Morita equivalent to $(A\rtimes_{\beta}(X\rtimes
H))\rtimes_{\alphab}G$.  The main result from this paper (and a little
symmetry) implies that both $(A\rtimes_{\alpha}(G\ltimes
X))\rtimes_{\betab}H$ and $(A\rtimes_{\beta}(X\rtimes
H))\rtimes_{\alphab}G$ are isomorphic to
$A\rtimes_{\omega}(G\ltimes X\rtimes H)$.  This implies that
$A^{\alpha}\rtimes_{\hat\beta}H$ and $A^{\beta}\rtimes_{\hat\alpha}G$
are Morita equivalent, and provides the last bit of the main result in
\cite{brogoe:xx12}.

The structure of this paper is roughly as follows.  We start by
briefly reviewing the groupoid crossed product construction in Section
\ref{sec:group-cross-prod}.  In Section \ref{sec:iter-cross-prod} we
introduce the iterated product and build the outer action.
Section \ref{sec:isom-iter-prod} contains the main result, as
described above, although significant portions of the proof are
postponed until Section \ref{sec:proof-that-upsilon}.
As is usual, we assume that homomorphisms between \cs-algebras are
$*$-preserving.  Furthermore, representations of \cs-algebras are
assumed to be nondegenerate.  Since we use the Equivalence and
Disintegration Theorems, we require separability in a nontrivial way
(see Remark~\ref{rem-separable-basic}).
Hence virtually all the topological spaces that appear here are second
countable, and with the exception of $B(\H)$ and other multiplier
algebras, our \cs-algebras and Banach spaces are assumed to be
separable.  In particular, our groupoids are all second countable and
Hausdorff, and will be assumed to have a Haar system.


\section{Groupoid Crossed Products}
\label{sec:group-cross-prod}

For background on groupoid crossed products, we refer to the
exposition in  \cite{muhwil:nyjm08}.  For the correspondence between
$C_{0}(X)$-algebras and upper semicontinuous $C^{*}$-bundles we refer
to \cite{wil:crossed}*{Appendix~C}.
For convenience, we review some of the basics here.

An upper semicontinuous $C^*$-bundle $\A$ over $X$ is a continuous
open surjection $p:\A\to X$ such that $A(x):=p\inv(x)$ is a
$C^*$-algebra for each $x\in X$ and which satisfies some additional continuity
conditions \cite[Definition~C.16]{wil:crossed}. In particular, the
algebra $A:=\sa_{0}(X,\A)$ of continuous sections vanishing at infinity
is a $\cs$-algebra with respect to the supremum norm.  On the other
hand, we say that a \cs-algebra $A$ is a
$C_0(X)$-algebra if it comes equipped with  a nondegenerate homomorphism of
$C_{0}(X)$ into the center of the multiplier algebra $M(A)$.  There is
a correspondence between $C_{0}(X)$-algebras and upper semicontinuous
\cs-bundles over $X$: given a $C_{0}(X)$-algebra $A$, there is a
bundle $\A$ such that $A$ is $C_{0}(X)$-isomorphic to $\sa_{0}(X,\A)$
(with its natural $C_{0}(X)$-action) \cite{wil:crossed}*{Theorem~C.26}.

Let $G$ be a locally compact Hausdorff groupoid, with unit space
$G^{(0)}$, Haar system $\sset{\lambda^u}_{u\in\go}$, and range and
source maps $r_G$ and $s_G$ respectively \cite{ren:groupoid}.  We
omit the subscripts on $r_G$ and $ s_G$ when the domain is clear from
context. Let $A$ be a $C_0(G^{(0)})$-algebra and $p:\A\to G^{(0)}$ be
the associated upper semicontinuous $C^*$-bundle.  An action $\alpha$
of $G$ on $A$ is a set of isomorphisms
$\{\alpha_{\gamma}:A(s(\gamma))\to A(r(\gamma))\}_{\gamma\in G}$ such
that $\alpha_\gamma\circ\alpha_\eta=\alpha_{\gamma\eta}$ and the map
$(\gamma,a)\mapsto \alpha_{\gamma}(a)$ is jointly continuous
\cite[Definition~4.1]{muhwil:nyjm08}. We refer to the triple
$(\A,G,\alpha)$ as a groupoid dynamical system. Let
$r^*\A=\{(\gamma,a)\in G\times\A: r(\gamma)=p(a)\}$ be the pull back
bundle of $\A$.   Let $\sa_{c}(G, r^*\A)$ be the
continuous compactly supported sections of $r^*\A$.

\begin{prop}[{\cite[Proposition~4.4]{muhwil:nyjm08}}] The space of 
  compactly supported sections, $\sa_{c}(G,
  r^*\A)$,  is a $*$-algebra with respect to the operations
  \begin{equation*}
    f*g(\gamma):=\int f(\eta)\alpha_\eta(g(\eta\inv
    \gamma))d\lambda^{r(\gamma)}(\eta)\quad\text{and}\quad
    f^*(\gamma)=\alpha_\gamma(f(\gamma\inv)^*). 
  \end{equation*}
\end{prop}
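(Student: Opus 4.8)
The plan is to verify, in order, that the two operations actually produce elements of $\sa_c(G,r^*\A)$ and then that they satisfy the $*$-algebra axioms. The first task is fiber bookkeeping. Since $\lambda^{r(\gamma)}$ is supported on $G^{r(\gamma)}:=r\inv(r(\gamma))$, every $\eta$ occurring in the integral defining $f*g(\gamma)$ has $r(\eta)=r(\gamma)$. Then $f(\eta)\in A(r(\eta))=A(r(\gamma))$, while $g(\eta\inv\gamma)\in A(r(\eta\inv\gamma))=A(s(\eta))$, so $\alpha_\eta$ carries it into $A(r(\eta))=A(r(\gamma))$. Hence the integrand lies in the single $C^*$-algebra $A(r(\gamma))$, the integral is a genuine vector-valued integral of a compactly supported continuous $A(r(\gamma))$-valued function, and $f*g(\gamma)\in A(r(\gamma))$; thus $f*g$ is a section of $r^*\A$. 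The analogous check gives $f^*(\gamma)=\alpha_\gamma(f(\gamma\inv)^*)\in A(r(\gamma))$.

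Next I would show these sections are continuous and compactly supported. For the involution this is immediate: $f^*$ is a composite of continuous maps (inversion, $f$, the bundle involution, and the action), and $\supp f^*=(\supp f)\inv$ is compact. For the convolution the substantive point is continuity of $\gamma\mapsto f*g(\gamma)$. The integrand $(\eta,\gamma)\mapsto f(\eta)\alpha_\eta(g(\eta\inv\gamma))$ is jointly continuous, using the joint continuity of the action together with continuity of the bundle multiplication, and for fixed $\gamma$ it is supported in the compact set $\supp f$. Continuity of the integral in $\gamma$ then follows from the defining continuity property of the Haar system $\{\lambda^u\}$, which governs $\int F(\eta,\gamma)\,d\lambda^{r(\gamma)}(\eta)$ as both the integrand and the measure vary. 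Compact support follows from $\supp(f*g)\subseteq(\supp f)(\supp g)$, which is compact because multiplication is continuous on composable pairs.

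With the operations landing in $\sa_c(G,r^*\A)$, the remaining identities all reduce to manipulations of the Haar system and the action. Bilinearity of $*$ and conjugate-linearity of the involution are clear. The identity $(f^*)^*=f$ follows from $\alpha_\gamma\circ\alpha_{\gamma\inv}=\alpha_{r(\gamma)}=\id$, where $\alpha_{r(\gamma)}=\id$ is forced by the cocycle identity $\alpha_\gamma\circ\alpha_\eta=\alpha_{\gamma\eta}$, together with the fact that each $\alpha_\gamma$ preserves the involution. For associativity I would expand $((f*g)*h)(\gamma)$ into a double integral, apply Fubini, then substitute $\eta\mapsto\zeta\xi$ via left-invariance of $\lambda$; pulling $\alpha_\zeta$ out of the inner integral (legitimate since $\alpha_\zeta$ is a continuous $*$-isomorphism) and applying $\alpha_{\zeta\xi}=\alpha_\zeta\alpha_\xi$ collapses the expression to $(f*(g*h))(\gamma)$. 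The anti-multiplicativity $(f*g)^*=g^**f^*$ is similar: write out $\alpha_\gamma\bigl((f*g)(\gamma\inv)^*\bigr)$, use that each $\alpha_\eta$ is a $*$-homomorphism and that the involution reverses products, then change variables $\eta\mapsto\gamma\inv\xi$ by left-invariance, again absorbing the resulting automorphisms through the cocycle identity.

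The main obstacle is the continuity of $f*g$: the algebraic axioms are bookkeeping once one is fluent with left-invariance and the cocycle relation, but showing the convolution is an honest continuous section requires the full upper semicontinuous bundle structure and the continuity axiom of the Haar system, and is the step where the standing second-countability and separability hypotheses are used in an essential way.
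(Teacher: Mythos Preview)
The paper does not actually prove this proposition; it is stated purely as a citation of \cite[Proposition~4.4]{muhwil:nyjm08}. So there is no argument here against which to compare your approach.

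That said, your outline is the standard one and is essentially correct. One remark: your closing sentence overstates the role of the standing hypotheses. Second countability and separability of $A$ are \emph{not} needed in any essential way to prove that $f*g$ is continuous; the argument goes through for arbitrary locally compact Hausdorff $G$ and arbitrary upper semicontinuous bundles $\A$. The subtlety you allude to is real---for merely upper semicontinuous bundles one cannot simply invoke a dominated-convergence argument on norms, since $x\mapsto\|a(x)\|$ need not be continuous---but it is handled by approximating sections in the inductive limit topology by sums of elementary tensors $\phi\cdot(a\circ r)$ with $\phi\in C_c(G)$ and $a\in A$, and this approximation does not require separability. The separability assumptions in this paper are invoked later, for the Disintegration Theorem and related measure-theoretic machinery, not for the basic $*$-algebra structure recorded here.
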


If $f\in\sa_{c}(G,r^{*}\A)$, we define
\begin{equation*}
  \|f\|_{I}=\max\Bigl\{ \sup_{u\in\go} \int
  \|f(\gamma)\|\,d\lambda^{u}(\gamma), \sup_{u\in\go}\int
  \|f^{*}(\gamma)\|\,d\lambda^{u}(\gamma)\Bigr\}, 
\end{equation*}
and say that $\pi:\sa_{c}(G,r^{*}\A)\to B(\H)$ is in $\Rep(G,A)$ if
$\|\pi(f)\|\le \|f\|_{I}$ for all $f\in\sa_{c}(G,r^{*}\A)$.  Then
\begin{equation*}
  \|f\|:=\sup \set{\|\pi(f)\|: \pi\in \Rep(G,A)}
\end{equation*}
defines a norm on $\sa_{c}(G,r^{*}\A)$. The crossed product,
$\A\rtimes_{\alpha}G$, is the completion of $\sa_{c}(G,r^{*}\A)$ with
respect to $\|\cdot\|$.

\begin{remark}
  [Notation] When working with groupoid dynamical systems
  $(\A,G,\alpha)$ it is a
  matter of taste whether to emphasize the bundle $\A$ or the
  \cs-algebra $A=\sa_{0}(\go,\A)$.  Hence many authors write
  $A\rtimes_{\alpha}G$ in place of $\A\rtimes_{\alpha}G$.  We did this
  in the introduction so that our main theorem looks like an
  associative law: $A\rtimes(G\ltimes X\rtimes G)\cong (A\rtimes
  (G\ltimes X))\rtimes H$.  However, in the remainder of this paper we
  will stick with the bundle notation.  This has a number of
  advantages --- for example, see Remark~\ref{rem-g-action} where the
  same \cs-algebra is the section algebra of different bundles.
\end{remark}

We say that a uniformly convergent net in $\sa_{c}(G,r^{*}\A)$ is
eventually compactly supported if there is an $i_{0}$ and a compact
set $K$ such that $\supp f_{i}\subset K$ for all $i\ge i_{0}$.
 There exists a topology on $\sa_c(G,
r^*\A)$ such that $F:\sa_c(G, r^*\A)\to V$ is
continuous into a locally convex linear space $V$ if and only if $F$
maps eventually compactly supported uniformly convergent nets to
convergent nets \cite[Lemma~D.10]{rw:morita}.  We 
call this the \emph{inductive limit topology} on $\sa_c(G, r^*\A)$.
Note that convergence in the inductive limit topology implies
convergence in the $\|\cdot\|_{I}$-norm, and hence in the \cs-norm
$\|\cdot\|$.

\begin{remark}[Separability Assumptions]
  \label{rem-separable-basic}
  In the sequel, we will be exclusively interested in \emph{separable}
  dynamical systems $(\A,G,\alpha)$.  By this we mean that $G$ is
  second countable and that $A:=\sa_{0}(\go,A)$ is separable.  This
  not only allows us to use the Disintegration Theorem and other
  results from \cite{muhwil:nyjm08}, but 
  has a number of other important consequences:
  \begin{enumerate}
  \item The total space, $\A$, is a second countable
    topological space.
  \item The $*$-algebra $\sa_{c}(G,r^{*}\A)$ is separable in the inductive
    limit topology.  (That is, there is a countable dense set $D$
    which is dense in the inductive limit topology.)
  \item If $(\A,G,\alpha)$ is separable, then the crossed product,
    $\A\rtimes_{\alpha}G$ is a separable \cs-algebra.
  \item Any nondegenerate $*$-homomorphism $\pi:\sa_{c}(G,r^{*}\A)\to
    B(\H)$ which is continuous with respect to
the inductive limit topology on $\sa_c(G, r^*\A)$ and the
weak-$*$ topology on $B(\mathcal{H})$ is in $\Rep(G,A)$.  (Since the
converse is automatic, we can view $\Rep(G,A)$ as the set of inductive
limit continuous representations in the separable case.)
  \end{enumerate}
In the case that $\A$ is a (continuous) \cs-bundle, assertion~(a)
follows from \cite{fd:representations1}*{Proposition~II.13.21}.  The
proof in general carries over easily using
\cite{wil:crossed}*{Theorem~C.25} to describe a basis for the topology
on $\A$.  Assertion~(b) is just
\cite{fd:representations1}*{Proposition~II.14.10} in the Banach bundle
case.  The proof in the general case follows \emph{mutatis mutandis}.
Assertion~(c) follows from assertion~(b).
Assertion~(d) is a consequence of the Disintegration Theorem
\cite{muhwil:nyjm08}*{Theorem~7.12}. 
\end{remark}


\section{The Iterated Crossed Product}
\label{sec:iter-cross-prod}

Throughout, $G$ and $H$ will be second countable, locally compact
Hausdorff groupoids with Haar systems $\sset{\lambda^u}_{u\in\go}$ and
$\sset{\sigma^v}_{v\in \ho}$, respectively.  We also fix a
$(G,H)$-equivalence $X$ as in \cite{mrw:jot87}*{Definition~2.1}.  Thus
there are maps $r_{X}:X\to\go$ and $s_{X}:X\to\ho$ and commuting free
and proper actions of $G$ and $H$, respectively.  (We will quickly
drop the subscript `$X$' from $r_{X}$ and $s_{X}$ since the domain
should be clear from context.)  Then we can define the following groupoid.

\begin{definition}\label{def: groupoid product}
  We set
  \[
  \Eu = G\ltimes X\rtimes H := \set{(\gamma,x,\eta)\in G\times X\times H :
  r(\gamma) = r(x)\ \text{and}\ s(x) = r(\eta)},
  \]
  and give $\Eu$ the subspace topology inherited from the product
  topology on $G\times X\times H$.  The groupoid operations are given by
  \begin{equation*}
    (\gamma,x,\eta)(\xi,\gamma\inv \cdot x \cdot \eta,\zeta) = (\gamma\xi, x,
    \eta\zeta)\quad\text{and}\quad(\gamma,x,\eta)\inv = (\gamma\inv,\gamma\inv
    \cdot x\cdot \eta,\eta\inv).
  \end{equation*}
Then we can identify $\eo$ with $X$ so that the range and source maps
are given by
\begin{equation*}
    s(\gamma,x,\eta) = \gamma\inv \cdot x \cdot \eta\quad\text{and} \quad
    r(\gamma,x,\eta) = x.
\end{equation*}
We define a Haar system on $\Eu$ by $\lambda_{\Eu}^x =
  \lambda^{r(x)}\times \delta_x \times \sigma^{s(x)}$ where $\delta_x$
  is the Dirac $\delta$-measure at $x$.
\end{definition}

It is routine to verify that with these operations, $\Eu$ is a second
countable locally compact Hausdorff groupoid with Haar system
$\sset{\lambda_{\Eu}^{x}}_{x\in X}$.

\begin{remark}[The Group Case]
  \label{rem-group-case}
  To see that $\Eu$ is a natural iterated construct in the groupoid
  realm, consider the situation where $\Ggr$ and $\Hgr$ are groups
  acting freely and properly on the left and right, respectively, of
  $X$.  Then we can form the transformation groupoids $G:=\Ggr\ltimes
  X/H$ and $H:=\Ggr\backslash X\rtimes \Hgr$, and let them act on $X$ in
  the natural way so that $X$ becomes a $(G,H)$-equivalence.  Suppose that
  $A=\sa_{0}(X,\A)$ is a $C_{0}(X)$-algebra with commuting $\Ggr$ and
  $\Hgr$ actions $ \alpha$ and $ \beta$, respectively, which induce 
  the given actions on $X$ as in
  \cite{muhwil:nyjm08}*{Example~4.8}.  Then, just as in
  \cite{muhwil:nyjm08}*{Example~4.8}, the crossed product
  $A\rtimes_{\alpha\times\beta}(\Ggr\times\Hgr)$ is isomorphic to the
  groupoid crossed product
  $\A\rtimes_{(\alpha\times\beta)^{\sim}}((\Ggr\times\Hgr)\rtimes 
  X)$ for an appropriate action $(\alpha\times\beta)^{\sim}$.  If
  we let
  $\Eu=(G\ltimes X\rtimes H)$, then the map
  $\bigl((s,x\cdot \Hgr),x,(G\cdot x,t)\bigr) \mapsto (s,t,x)$ is an
  groupoid isomorphism of $\Eu = (\Ggr\ltimes X/\Hgr)\ltimes X \rtimes
  (\Ggr\backslash X\rtimes \Hgr)$ onto $(\Ggr\times \Hgr)\rtimes X$
  which intertwines an action $\omega$ with $(\alpha\times\beta)^{\sim}$.
  Hence the groupoid crossed product $\A\rtimes_{\omega}\Eu$ is
  (isomorphic to) the iterated crossed product
  $A\rtimes_{\alpha\times\beta}(\Ggr \times \Hgr)$.

\end{remark}

  We will identify the transformation groupoid $\Gu:=G\ltimes X/H$ with
  the closed subgroupoid $\set{(\gamma,x,s(x))\in\Eu:r(\gamma)=r(x)}$ of
  $\Eu$.  Thus we will often write $(\gamma,x)\in\Gu$ in place of
  $(\gamma,x,s(x))$.  We equip $\Gu$ with the Haar system
  $\lgb=\sset{\lg^{r(x)}\times\delta_{x}}_{x\in X}$.  Similar
  statements hold for the transformation groupoid $\Hu=X\rtimes H$.
  Notice that $\Eu$, $\Gu$ and $\Hu$ all have unit spaces identified
  with $X$.

If $\Eu$ acts continuously on an upper semicontinuous
\cs-bundle $\A$ by isomorphisms, then the same is true of any closed
subgroupoid.  Hence we get the following proposition.
\begin{prop}
  \label{prop:2}
  Let $(\A,\Eu,\omega)$ be a groupoid dynamical system.  Then
  the restrictions
  \begin{equation*}
    \alpha_{(\gamma,x)}:=\omega_{(\gamma,x,s(x))}\quad\text{and}\quad
    \alpha'_{(x,\eta)} =\omega_{(r(x),x,\eta)}
  \end{equation*}
are continuous actions of $\Gu$ and $\Hu$, respectively, by
isomorphisms on $\A$.
\end{prop}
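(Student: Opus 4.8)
The plan is to use the observation recorded immediately before the statement: the restriction of a continuous action by isomorphisms to a closed subgroupoid is again such an action. Thus it suffices to exhibit $\Gu$ and $\Hu$ as closed subgroupoids of $\Eu$ via the identifications already fixed, and to check that the displayed formulas for $\alpha$ and $\alpha'$ are precisely $\omega$ restricted to these subgroupoids. By the evident symmetry between the two factors I would treat $\alpha$ and $\Gu$ in detail and then transpose the argument to $\alpha'$ and $\Hu$.

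First I would verify that $(\gamma,x)\mapsto(\gamma,x,s(x))$ is a continuous injective groupoid homomorphism of $\Gu$ onto the closed set $\set{(\gamma,x,s(x))\in\Eu:r(\gamma)=r(x)}$. Computing in $\Eu$ gives $r(\gamma,x,s(x))=x$ and $s(\gamma,x,s(x))=\gamma\inv\cdot x\cdot s(x)=\gamma\inv\cdot x$, which are exactly $r_{\Gu}(\gamma,x)$ and $s_{\Gu}(\gamma,x)$; hence $\alpha_{(\gamma,x)}=\omega_{(\gamma,x,s(x))}$ is an isomorphism $A(\gamma\inv\cdot x)\to A(x)$ with the source and range fibres demanded of an action of $\Gu$. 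For multiplicativity I would take a composable pair $(\gamma,x)$, $(\xi,\gamma\inv\cdot x)$ in $\Gu$ and use the identity $s(\gamma\inv\cdot x)=s(x)$ --- valid because the $G$-action preserves the source map $s_{X}$ of the equivalence --- to see that their images are composable in $\Eu$ and that the product formula of Definition~\ref{def: groupoid product} yields $(\gamma,x,s(x))(\xi,\gamma\inv\cdot x,s(x))=(\gamma\xi,x,s(x))$. The relation $\alpha_{(\gamma,x)}\circ\alpha_{(\xi,\gamma\inv\cdot x)}=\alpha_{(\gamma\xi,x)}$ then follows immediately from the corresponding relation for $\omega$, and each $\alpha_{(\gamma,x)}$ is an isomorphism because $\omega_{(\gamma,x,s(x))}$ is.

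It remains to transport joint continuity. The map $((\gamma,x),a)\mapsto\alpha_{(\gamma,x)}(a)$ on the pull-back $r^{*}\A$ over $\Gu$ factors as the composite of the continuous map $((\gamma,x),a)\mapsto((\gamma,x,s(x)),a)$ into the pull-back over $\Eu$ --- which respects the fibre condition since $r(\gamma,x)=x=r(\gamma,x,s(x))$ --- with the jointly continuous action map of $\omega$; so $\alpha$ is jointly continuous. The same three steps prove the assertion for $\alpha'$ and $\Hu$, now using $r(x\cdot\eta)=r(x)$ in place of $s(\gamma\inv\cdot x)=s(x)$. The only point requiring genuine care --- and the closest thing to an obstacle --- is the verification of these two invariance identities, which express that each action leaves the other groupoid's moment map fixed; this is exactly what guarantees that $\Gu$ and $\Hu$ really are subgroupoids of $\Eu$ rather than merely subsets, after which the whole statement is pure transport along a groupoid homomorphism.
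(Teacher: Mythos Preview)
Your proposal is correct and follows exactly the paper's approach: the paper's entire proof is the one-sentence observation preceding the proposition that restriction of a continuous action to a closed subgroupoid is again such an action, and you have simply supplied the routine verifications that the paper omits. One small slip worth fixing: the domain of the joint-continuity map $((\gamma,x),a)\mapsto\alpha_{(\gamma,x)}(a)$ is the pull-back along the \emph{source} map (pairs with $p(a)=s_{\Gu}(\gamma,x)$), not $r^{*}\A$, so the fibre condition you actually need is $s_{\Gu}(\gamma,x)=\gamma^{-1}\cdot x=s_{\Eu}(\gamma,x,s(x))$ --- which you already verified earlier --- rather than the range identity you cite at that step.
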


To obtain the inner portion of our iterated crossed product, we form
the crossed product $B:=\A\rtimes_{\alpha}\Gu$.

\begin{remark}
  \label{rem-g-action}
  We can also view $B$ as a crossed product by $G$.  Since $r_{X}:X\to \go$ is
  continuous, there is an upper semicontinuous \cs-bundle $\A'$ over
  $\go$  such that $A:=\sa_{0}(X,\A) \cong \sa_{0}(\go,\A')$.
  Furthermore there is an 
  induced action $\tilde\alpha$ of $G$ on $\A'$ such that
  $\A'\rtimes_{\tilde \alpha}G$ is isomorphic to $\A\rtimes\Gu$.  (See
  \cite{fmw:pams04}*{Theorem~2} for the details in the case where $A$
  has continuous trace.)
\end{remark}

Since $X$ is a $(G,H)$-equivalence, the source map $s_{X}$ factors
through a homeomorphism of $G\backslash X$ with $\ho$.  In particular, $G\backslash X$ is
Hausdorff and we can identify $v\in\ho$ with the orbit
$s_{X}^{-1}(v)$.  Thus the following proposition follows immediately
from \cite{goe:hjm10}*{Proposition~4.2}.

\begin{prop}
  \label{prop:1}
  Let $H$ and $B$ be as above. Then $B$ is a $C_0(\ho)$-algebra with
  respect to
  the action
  \begin{equation*}
    \phi\cdot f(\gamma,x) = \phi(s(x))f(\gamma,x)
  \end{equation*}
  for $\phi\in C_c(\ho)$ and $f\in
  \sa_{c}(\underline{G},r^*\A)$.  Furthermore, the restriction
  map $\sa_c(\underline{G},r^*\A)\to
  \sa_c(\underline{G}\restr{s\inv(v)},r^*\A)$ factors to an
  isomorphism of the fibre $B(v)$ with $\A\restr{s^{-1}(v)} \rtimes
  \underline{G}\restr{s\inv(v)}$.
\end{prop}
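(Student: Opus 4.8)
The plan is to work through the dense $*$-subalgebra $\sa_{c}(\Gu,r^{*}\A)$ and to exploit the one structural fact that makes everything go: because $X$ is a $(G,H)$-equivalence, the source map is $G$-invariant, so $s(\gamma\inv\cdot x)=s(x)$ whenever $(\gamma,x)\in\Gu$ (the range $x$ and the source $\gamma\inv\cdot x$ of such an element have the same image under $s$). First I would set up the $C_{0}(\ho)$-action. The formula $\phi\cdot f(\gamma,x)=\phi(s(x))f(\gamma,x)$ makes sense since $f(\gamma,x)\in A(x)$ and $\phi(s(x))$ is a scalar, and I would check directly that $\phi\cdot(f*g)=(\phi\cdot f)*g=f*(\phi\cdot g)$ together with $(\phi\cdot f)^{*}=\bar\phi\cdot f^{*}$. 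The identity $(\phi\cdot f)*g=\phi\cdot(f*g)$ is immediate because $\phi(s(x))$ pulls straight out of the convolution integral; the identity $f*(\phi\cdot g)=\phi\cdot(f*g)$ is where $G$-invariance enters, since the integrand of $f*(\phi\cdot g)(\gamma,x)$ carries the factor $\phi(s(\eta\inv\cdot x))=\phi(s(x))$, which is then constant in $\eta$ and pulls out. Thus each $\phi$ acts as a central double centralizer of $\sa_{c}(\Gu,r^{*}\A)$.

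The only nonformal point in the first assertion is boundedness in the \cs-norm, needed to extend $\phi$ to a central multiplier of $B$. Here I would observe that, since $A$ is a $C_{0}(X)$-algebra and $s$ is $G$-invariant, $\phi\circ s\in C_{b}(X)$ determines a $G$-invariant central multiplier $m(\phi)\in ZM(A)$ acting as $\phi(s(x))$ on the fibre $A(x)$. Given $\pi\in\Rep(\Gu,A)$ with disintegration $(\mu,\{\H_{x}\},U)$, the operator $\tilde\pi(m(\phi))$ acts as the scalar $\phi(s(x))$ on $\H_{x}$; because $s(\gamma\inv\cdot x)=s(x)$ these scalars agree on the source and range fibres of every $U(\gamma,x)$, so $\tilde\pi(m(\phi))$ commutes with the groupoid part and $\pi(\phi\cdot f)=\tilde\pi(m(\phi))\pi(f)$. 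Hence $\|\pi(\phi\cdot f)\|\le\|\phi\|_{\infty}\|\pi(f)\|$, and taking the supremum over $\Rep(\Gu,A)$ gives $\|\phi\cdot f\|\le\|\phi\|_{\infty}\|f\|$. Nondegeneracy is easy: for $f\in\sa_{c}(\Gu,r^{*}\A)$ the image under $s$ of the range-projection of $\supp f$ is compact in $\ho$, so choosing $\phi\equiv1$ there gives $\phi\cdot f=f$. This produces the required nondegenerate central homomorphism $C_{0}(\ho)\to ZM(B)$.

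For the second assertion I would identify the fibre $B(v)=B/I_{v}$, where $I_{v}=\overline{\{\phi\cdot b:\phi(v)=0\}}$, with $\A\restr{s\inv(v)}\rtimes\Gu\restr{s\inv(v)}$. Since $\ho$ is Hausdorff and $s$ is $G$-invariant, $s\inv(v)$ is a single closed $G$-orbit, $\Gu\restr{s\inv(v)}$ is a closed subgroupoid, and restriction of sections $\operatorname{res}_{v}\colon\sa_{c}(\Gu,r^{*}\A)\to\sa_{c}(\Gu\restr{s\inv(v)},r^{*}\A)$ is a $*$-homomorphism, because the convolution integral over $\lambda^{r(x)}$ never leaves the invariant set $s\inv(v)$. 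A Tietze-type extension for sections of the bundle over the closed set $s\inv(v)$ gives $\operatorname{res}_{v}$ dense range, and restriction to an invariant subgroupoid is norm-decreasing (a representation of the sub-crossed-product, precomposed with $\operatorname{res}_{v}$, lies in $\Rep(\Gu,A)$), so $\operatorname{res}_{v}$ extends to a surjection $B\to\A\restr{s\inv(v)}\rtimes\Gu\restr{s\inv(v)}$. The inclusion $I_{v}\subseteq\ker\operatorname{res}_{v}$ is immediate, since $\phi\cdot f(\gamma,x)=\phi(v)f(\gamma,x)=0$ on $s\inv(v)$ when $\phi(v)=0$.

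The main obstacle is the reverse inclusion $\ker\operatorname{res}_{v}\subseteq I_{v}$, which I would settle through the Disintegration Theorem (available by separability). A representation $\pi$ of $B$ kills $I_{v}$ exactly when $\tilde\pi(m(\phi))=\phi(v)\,\id$ for all $\phi$; disintegrating over $X$, this forces the quasi-invariant measure $\mu$ to be concentrated on $s\inv(v)$, so $\pi$ sees only $\Gu\restr{s\inv(v)}$ and reconstitutes as a representation of $\A\restr{s\inv(v)}\rtimes\Gu\restr{s\inv(v)}$ composed with $\operatorname{res}_{v}$. Thus the representations of $B$ vanishing on $I_{v}$ are precisely those factoring through $\operatorname{res}_{v}$, which yields $\ker\operatorname{res}_{v}=I_{v}$ and the stated isomorphism of $B(v)$. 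I expect this measure-concentration step to be the delicate part, since it is exactly where the analytic content of the groupoid crossed product enters; everything else is formal once $G$-invariance of $s$ is in hand. As a shortcut, one could instead note that $s\colon X\to\ho$ is a continuous open $\Gu$-invariant surjection and quote the general structure theorem for such fibred crossed products in place of the last two steps.
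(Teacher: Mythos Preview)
Your argument is correct. The paper, however, does not prove this proposition directly: it simply observes that since $X$ is a $(G,H)$-equivalence the source map $s_{X}$ induces a homeomorphism of $G\backslash X$ onto $\ho$ (so that $s^{-1}(v)$ is a single $G$-orbit and the orbit space is Hausdorff), and then invokes Proposition~4.2 of Goehle's Houston J.\ Math.\ paper on the Mackey machine for regular groupoids --- precisely the general ``fibred crossed product'' structure theorem you mention in your final sentence as a possible shortcut. So you have in effect written out the proof of that cited result, specialized to the present setting. What your direct approach buys is self-containment and a transparent view of where the analytic work sits (the measure-concentration step via the Disintegration Theorem, which you correctly flag as the delicate point); what the paper's citation buys is brevity. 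There is no disagreement in substance.
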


\begin{remark}
  \label{rem-separable}
  Since $B$ is
  separable by Remark~\ref{rem-separable-basic}(c) and since $\ho$ is second
  countable, $\B$ must be second countable as in
  Remark~\ref{rem-separable-basic}(a). 
\end{remark}

We will write $B_{0}(v)$ for the dense $*$-subalgebra
$\sa_{c}(\Gu\restr{s^{-1}(v)},r^{*}\A)$ of $B(v)$. If
  $f\in\sa_{c}(\Gu,r^{*}\A)$ and $v\in\ho$, then we'll write $f_{v}$
  for the element of $B_{0}(v)$ obtained by restriction.  We use the
  set of such sections to define a topology on
  $\B:=\coprod_{v\in\ho}B(v)$ as in \cite{wil:crossed}*{Theorem~C.25}
  making $\B$ an upper semicontinuous \cs-bundle.  Note that $B\cong
  \sa_{0}(\ho,\B)$ and $v\mapsto f_{v}$ is a prototypical section in
  $\sa_{c}(\ho,\B)$.  We can now build the
  outer action of our iterated crossed product.

\begin{prop}
  \label{prop:3}
  Let $(\A,\Eu,\omega)$ be a separable groupoid
  dynamical system with $H$ and $\B$ as above.  Then there is a
  groupoid dynamical system $(\B,H,\beta)$ where, for $f\in
  B_{0}(s(\eta))$,
  \begin{equation}\label{eq:1}
    \beta_{\eta}(f)(\gamma,x)=\omega_{(r(x),x,\eta)}
    \bigl(f(\gamma,x\cdot\eta)\bigr). 
  \end{equation}
\end{prop}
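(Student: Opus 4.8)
The plan is to verify the four requirements for $(\B,H,\beta)$ to be a groupoid dynamical system in the sense of \cite[Definition~4.1]{muhwil:nyjm08}: that each $\beta_\eta$ is a well-defined $*$-isomorphism $B(s(\eta))\to B(r(\eta))$, that $\beta_\eta\circ\beta_{\eta'}=\beta_{\eta\eta'}$, and that $(\eta,b)\mapsto\beta_\eta(b)$ is jointly continuous. Every algebraic fact will flow from the cocycle identity $\omega_e\circ\omega_{e'}=\omega_{ee'}$ for the action $\omega$ of $\Eu$, together with the fact that each $\omega_e$ is an isometric $*$-isomorphism of fibres; continuity will be the genuine obstacle.

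First I would check that \eqref{eq:1} lands where it should. For $(\gamma,x)\in\Gu\restr{s\inv(r(\eta))}$ we have $s(x)=r(\eta)$, so $x\cdot\eta$ is defined, lies in $s\inv(s(\eta))$, and $(\gamma,x\cdot\eta)\in\Gu\restr{s\inv(s(\eta))}$ because $r(x\cdot\eta)=r(x)$; moreover $\omega_{(r(x),x,\eta)}$ carries $A(x\cdot\eta)$ isomorphically onto $A(x)$, so $\beta_\eta(f)$ is a section of $r^*\A$ over $\Gu\restr{s\inv(r(\eta))}$. Since right translation $(\gamma,x)\mapsto(\gamma,x\cdot\eta)$ is a homeomorphism and $\omega$ is jointly continuous, $\beta_\eta(f)$ is continuous and compactly supported, i.e.\ $\beta_\eta\colon B_0(s(\eta))\to B_0(r(\eta))$.

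Next, $\beta_\eta$ is a $*$-homomorphism. The one substantive computation is convolution: expanding $\beta_\eta(f)*\beta_\eta(g)(\gamma,x)$ against the Haar system $\lgb$ and using that each $\omega_e$ is multiplicative, the cocycle identity collapses $\omega_{(\xi,x,\eta)}=\omega_{(r(x),x,\eta)}\circ\omega_{(\xi,x\cdot\eta,s(\eta))}$ and lets one pull the single isomorphism $\omega_{(r(x),x,\eta)}$ outside the integral. The crucial structural point is that $r(x)=r(x\cdot\eta)$, so the inner integral is exactly against $\lambda^{r(x\cdot\eta)}$ and reassembles into $(f*g)(\gamma,x\cdot\eta)$; hence $\beta_\eta(f)*\beta_\eta(g)=\beta_\eta(f*g)$, and the adjoint identity $\beta_\eta(f^*)=\beta_\eta(f)^*$ is analogous. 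The same equality $r(x)=r(x\cdot\eta)$ shows $\|\beta_\eta(f)\|_I=\|f\|_I$, since the $H$-translation does not disturb the $\lambda$-integrals defining the $I$-norm; consequently composing any $I$-bounded representation of $B(r(\eta))$ with $\beta_\eta$ yields an $I$-bounded representation of $B_0(s(\eta))$, whence $\|\beta_\eta(f)\|\le\|f\|$. Applying this also to $\beta_{\eta\inv}$ shows $\beta_\eta$ is isometric, so it extends to a $*$-isomorphism $B(s(\eta))\to B(r(\eta))$ with inverse $\beta_{\eta\inv}$. The homomorphism law $\beta_\eta\circ\beta_{\eta'}=\beta_{\eta\eta'}$ is a second, shorter application of the cocycle identity on $B_0$, extended by continuity.

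The hard part is joint continuity. I would first reduce to prototypical sections: since $v\mapsto f_v$ is continuous for $f\in\sa_c(\Gu,r^*\A)$ and each $\beta_\eta$ is isometric, a $3\epsilon$-argument shows it suffices to prove that $\eta\mapsto\beta_\eta(f_{s(\eta)})$ is a continuous $\B$-valued section on $H$ for each such $f$. Writing $\beta_\eta(f_{s(\eta)})(\gamma,x)=\omega_{(r(x),x,\eta)}\bigl(f(\gamma,x\cdot\eta)\bigr)$, the right-hand side is jointly continuous in $(\gamma,x,\eta)$ and supported in a fixed compact set of $\gamma$'s and of $x\cdot\eta$'s, so continuity into the upper semicontinuous bundle $\B$ ought to follow by approximating in the $I$-norm with prototypical sections of $\B$. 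The delicate issue, and where I expect the real work to lie, is that the fibres $s\inv(r(\eta))$ over which one integrates move with $\eta$, so the comparison section must be transported continuously; here I would use the openness of $s_X$ (equivalently, local continuous sections of $G\backslash X\cong\ho$) together with the joint continuity of $\omega$ and of the $H$-action on $X$ to obtain the required uniform estimates on compacta.
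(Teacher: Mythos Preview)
Your algebraic verifications are correct and match the paper exactly: \eqref{eq:1} maps $B_0(s(\eta))$ into $B_0(r(\eta))$, is a $*$-homomorphism, is $\|\cdot\|_I$-isometric (hence extends to the completions), and satisfies $\beta_{\eta}\circ\beta_{\eta'}=\beta_{\eta\eta'}$. Your reduction of joint continuity to the statement that $\eta\mapsto\beta_\eta(F_{s(\eta)})$ is continuous for each $F\in\sa_c(\Gu,r^*\A)$ is also exactly the paper's Claim.

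The gap is in the final paragraph, where you propose to handle the moving fibres via ``the openness of $s_X$ (equivalently, local continuous sections of $G\backslash X\cong\ho$)'' and unspecified uniform estimates. This does not address the real obstacle. Convergence in the upper semicontinuous bundle $\B$ is tested against sections $v\mapsto F_v$ coming from $\sa_c(\Gu,r^*\A)$, so what you need is an element $F_0\in\sa_c(\Gu,r^*\A)$ with $(F_0)_{r(\eta)}=\beta_\eta(F_{s(\eta)})$ near $\eta_0$. But $\beta_\eta(F_{s(\eta)})$ depends on $\eta$, not merely on $r(\eta)$; distinct $\eta$'s with the same range give different values, so no local section of $s_X$ or $r_H$ will manufacture such an $F_0$ directly. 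Your sketch does not explain how to get around this.

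The paper's device is a subsequence dichotomy. Using second countability one works with sequences, passes to a subsequence, and splits on whether $r(\eta_i)=r(\eta_0)$ infinitely often. If so, one lands in a single fibre $B(r(\eta_0))$, where the relative bundle topology is the norm topology; there a contradiction argument using compactness of supports and continuity of $\omega$ forces uniform (hence inductive-limit, hence norm) convergence. If not, one may assume the $r(\eta_i)$ are pairwise distinct; then $r(\eta_i)\mapsto\eta_i$ \emph{is} a well-defined continuous function on the compact set $\{r(\eta_i)\}_{i\ge0}$, and one defines
\[
F_0(\gamma,x):=\omega_{(r(x),x,\eta_{\iota(x)})}\bigl(F(\gamma,x\cdot\eta_{\iota(x)})\bigr)
\]
on $\Gu\restr{s_X^{-1}(\{r(\eta_i)\})}$, where $\iota(x)=i$ iff $s_X(x)=r(\eta_i)$. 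This is continuous, and Tietze extension for upper semicontinuous Banach bundles (\cite{muhwil:dm08}*{Proposition~A.5}) promotes it to an element of $\sa_c(\Gu,r^*\A)$ whose restrictions are exactly $\beta_{\eta_i}(F_{s(\eta_i)})$, giving convergence in $\B$ by construction. This two-case/Tietze trick is the missing idea in your proposal.
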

\begin{proof}
  Since multiplication by $\eta$ is a homeomorphism of
  $s^{-1}_{X}(s_{H}(\eta))$ onto $s_{X}^{-1}(r_{H}(\eta))$, it is not
  hard to check that \eqref{eq:1} defines a $*$-homomorphism of
  $B_{0}(s(\eta))$ into $B_{0}(r(\eta))$.  Since $\beta_{\eta}$ is
  $\|\cdot\|_{I}$-isometric, it extends to all of $B(s(\eta))$.  Elementary calculations show that $\beta_{v}=\id$ if $v\in\ho$, and
  that for composable $\eta$ and $\zeta$ we have
  $\beta_{\eta\zeta}=\beta_{\eta}\circ\beta_{\zeta}$. Thus
  $\eta\mapsto \beta_{\eta}$ is an action of $H$ on $\B$ by
  isomorphisms.  It only remains to show that the action is continuous.

 Since $\B$ is second countable by Remark~\ref{rem-separable}, we can
 work with sequences.  Thus we assume that $\eta_i\to \eta_0$ in $H$
 and $b_{i}\to b_{0}$ in $\B$ with $b_{i}\in B(s(\eta_{i}))$ for all
 $i$.  We need to verify that $\beta_{\eta_{i}}(b_{i})\to
 \beta_{\eta}(b)$ in $\B$.

Fix
  $\epsilon > 0$.  Let $u_{i}=s(\eta_{i})$ and $v_{i}=r(\eta_{i})$ for
  all $i\ge0$.   Choose $b\in B:=\A\rtimes_{\alpha}\Gu$ such that $b(u_0) =
  b_0$, and let $F\in \sa_c(\Gu,r^*\A)$ be such
  that $\|F-b\|< \epsilon /2$.  Observe that
  \begin{equation}
    \label{eq:2}
    \|F_{u} - b(u)\| < \epsilon/2\quad\text{for all $u\in \ho$}
  \end{equation}
  where $F_{u}$ denotes the
  restriction of $F$ to $\Gu\restr{s\inv(u)}$.  We first show that it
  suffices to 
  prove the following claim:
  \begin{claim}
    If $F\in \Gamma_c(\Gu ,r^*\A)$, $\eta_i\to\eta_0$
    and $u_i, v_i$ are as above, then $\beta_{\eta_i}(F_{u_i}) \to
    \beta_{\eta_0}(F_{u_0})$ in $\B$.
  \end{claim}

Suppose that claim is valid.  By \eqref{eq:2}
we have
\begin{equation*}
\|\beta_{\eta_0}(F_{u_0}) - \beta_{\eta_0}(b_0)\| = \|F_{u_0} - b(u_0)\|
< \epsilon/2 < \epsilon.
\end{equation*}
Since both $b_i \to b_0$ and $b(u_i)\to b(u_0)=b_0$ we have $\|b(u_i)
- b_i \|\to 0$.  Consequently it follows that for large $i$
\begin{equation*}
\|\beta_{\eta_i}(F_{u_i}) - \beta_{\eta_i}(b_i)\| \leq
\|F_{u_i}-b(u_i)\| + \|b(u_i)-b_i\| < \epsilon.
\end{equation*}
It follows from \cite[Proposition C.20]{wil:crossed} that
$\beta_{\eta_i}(b_i) \to \beta_{\eta_0}(b_0)$ in $\B$ as required.  

Thus it will suffice to prove the claim.  Since it will suffice to see
that every subsequence of $\sset{\beta_{\eta_{i}}(F_{u_{i}})}$ has a
subsequence converging to $\beta_{\eta_{0}}(F_{u_{0}})$, we can
replace $\sset{\beta_{\eta_{i}}(F_{u_{i}})}$ by a subsequence,
relabel, and find a convergent subsequence.  If $v_i = v_0$ infinitely
often, then we 
can pass to another subsequence and assume that $v_i = v_0$ for all $i\geq
0$.  Then, since the relative topology of $B(v_{0})$ in $\B$ is the
norm topology,\footnote{The proof is the same as that for (continuous)
  Banach bundles in
  \cite{fd:representations1}*{Proposition~II.13.11}.}  we can assume
by way of contradiction 
that $\sset{\beta_{\eta_{i}}(F_{u_{i}})}$ does not converge to
$\beta_{\eta_{0}}(F_{u_{0}})$ 
in the inductive limit topology.  Since for any compact neighborhood
  $D$ of $\eta_0$, the supports of $\beta_{\eta_i}(F(u_i))$ are
  eventually 
  contained in the compact set $\{(\gamma,x,\eta):(\gamma,x)\in
  \supp(F), \eta\in D\}$, it follows that
  $\sset{\beta_{\eta_{i}}(F_{u_{i}})}$ does not converge to 
$\beta_{\eta_{0}}(F_{u_{0}})$ uniformly.  It follows that, by passing to
another subsequence and relabeling, we can assume that there exists a
$\delta>0$ such that for each $i>0$ we can pick $(\gamma_i,x_i)$ such
that
  \begin{equation}
    \label{eq:3}
    \|\beta_{\eta_i}(F_{u_i})(\gamma_i,x_i) -
    \beta_{\eta_0}(F_{u_0})(\gamma_i,x_i)\| \geq \delta > 0.
  \end{equation}
  If equation \eqref{eq:3} is to hold we must either have
  $(\gamma_i,x_i\cdot \eta_i)\in \supp (F)$ infinitely often, or
  $(\gamma_i,x_i\cdot \eta_0)\in \supp (F)$ infinitely often.  In either
  case we may pass to a subsequence and multiply by the appropriate
  $H$ elements to find $(\gamma_0,x_0)\in\Gu $ such that
  $(\gamma_i,x_i)\to(\gamma_0,x_0)$.  We then have
  \begin{equation*}
  F(\gamma_i,x_i\cdot \eta_i)\to F(\gamma_0,x_0\cdot \eta_0),\quad\text{and}\quad
  F(\gamma_i,x_i\cdot \eta_0)\to F(\gamma_0,x_0\cdot \eta_0).
  \end{equation*}
  Since $\omega$ is continuous it follows that
  $\beta_{\eta_i}(F_{u_i})(\gamma_i,x_i)$ and
  $\beta_{\eta_0}(F_{u_0})(\gamma_i,x_i)$ both converge to
  $\beta_{\eta_0}(F_{u_0})(\gamma_0,x_0)$. This contradicts
  \eqref{eq:3}, and thus the claim holds in this case.

  On the other hand, suppose that we may remove an initial segment and
  assume that $v_i \ne v_0$ for all $i > 0$.  Then we may also pass to
  a subsequence and assume that $v_i \ne v_j$ for all $i \ne j$.  Let
  $\Omega = \{v_i\}_{i=0}^\infty$ and define $\iota$ on
  $s_X\inv(\Omega)$ by $\iota(x) = i$ if and only if $s_X(x) = v_i$.
  Note that $\Omega$ is compact.  It is straightforward to show that
  $x\mapsto \eta_{\iota(x)}$ is continuous on $s_X\inv(\Omega)$. Since
  $\omega$ and $F$ are also continuous,
  \begin{equation*}
    F_0(\gamma,x) := \beta_{\eta_{\iota(x)}}(F_{u_{\iota(x)}})(\gamma,x) = 
    \omega_{(r(x),x,\eta_{\iota(x)})}(F(\gamma,x\cdot \eta_{\iota(x)}))
  \end{equation*}
  defines an element of
  $\sa_c(\Gu _{s_X\inv(\Omega)},r^*\A)$.  By the Tietze Extension
  Theorem for upper semicontinuous Banach bundles
  \cite{muhwil:dm08}*{Proposition~A.5}, we can assume that
  $F_{0}\in\sa_{c}(\Gu,r^{*}\A)$.    Because $v\mapsto (F_0)_{v}$ is a
  continuous section of $\B$, 
  \begin{equation*}
 \beta_{\eta_i}(F_{u_i}) =(F_0)_{v_i}  \to (F_0)_{v_0} =
  \beta_{\eta_0}(F_{u_0})
  \end{equation*}
  with respect to the topology on $\B$.  This completes the proof.
\end{proof}


\section{The Main Theorem}
\label{sec:isom-iter-prod}

Now that we have constructed the action of $H$ on $\A\rtimes \Gu$ we
may state the main result of the paper. 

\begin{thm}
  \label{thm:isomorphism}
  Let $G$ and $H$ be second countable locally compact Hausdorff
  groupoids and $X$ a $(G,H)$-equivalence.  Suppose that
  $(\A,\Eu,\omega)$ is a separable groupoid dynamical
  system.  Let $\alpha$ be the restriction of $\omega$ to
  $\Gu = G\ltimes X$ and $B = \A\rtimes_\alpha \Gu =\sa_{0}(H,\B)$.
  Let $\beta$ be the action of $H$ on $\B$ given by
  \begin{equation*}
    \beta_\eta(f)(\gamma,x) = \omega_{(r(x),x,\eta)}(f(\gamma,x\eta)).
  \end{equation*}
  Then there is an isomorphism $\Upsilon : \A\rtimes_{\omega} \Eu \to
  \B\rtimes_{\beta} H$ characterized by 
  \begin{equation*}
    \Upsilon(f)(\eta)(\gamma,x) := f(\gamma,x,\eta)
  \end{equation*}
 for all $f\in \Gamma_c(\Eu,r^*\A)$.
\end{thm}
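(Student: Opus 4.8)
The plan is to introduce $\Upsilon$ first at the level of compactly supported sections and then promote it to an isometric isomorphism of the completed crossed products. For $f\in\Gamma_{c}(\Eu,r^{*}\A)$ and $\eta\in H$, the assignment $(\gamma,x)\mapsto f(\gamma,x,\eta)$ is a compactly supported continuous section of $r^{*}\A$ over $\Gu\restr{s\inv(r(\eta))}$, so $\Upsilon(f)(\eta)\in B_{0}(r(\eta))\subset B(r(\eta))$. Continuity of $\eta\mapsto\Upsilon(f)(\eta)$ as a section of $r^{*}\B$ over $H$ follows from the continuity of $f$ and the description of the topology on $\B$ (via \cite{wil:crossed}*{Theorem~C.25}), and compact support in $\eta$ follows from that of $f$; hence $\Upsilon(f)\in\Gamma_{c}(H,r^{*}\B)$ and $\Upsilon$ is a linear map into the dense subalgebra $\Gamma_{c}(H,r^{*}\B)$ of $\B\rtimes_{\beta}H$.

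First I would check that $\Upsilon$ is a $*$-homomorphism. The key structural fact is the factorization in $\Eu$
\[
(\xi,x,\zeta)=(\xi,x)\,(\xi\inv\cdot x,\zeta),\qquad\text{so that}\qquad \omega_{(\xi,x,\zeta)}=\alpha_{(\xi,x)}\circ\alpha'_{(\xi\inv\cdot x,\zeta)},
\]
together with the product form of the Haar system, $\lambda_{\Eu}^{x}=\lgb^{x}\times\sigma^{s(x)}$. Feeding these into the convolution $(f*g)(\gamma,x,\eta)$ splits it as an iterated integral whose inner integration over the $G$-variable reproduces the $\Gu$-convolution defining the multiplication in the fibre $B(r(\eta))$, and whose outer integration over the $H$-variable reproduces the $\beta$-twisted convolution of $\B\rtimes_{\beta}H$ --- the twist $\beta_{\zeta}$ being exactly the factor $\alpha'_{(\cdot,\zeta)}$. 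A parallel computation with $f^{*}(\epsilon)=\omega_{\epsilon}(f(\epsilon\inv)^{*})$ gives $\Upsilon(f^{*})=\Upsilon(f)^{*}$. These are routine once the factorization is in hand.

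The hard part will be showing that $\Upsilon$ is isometric for the \cs-norms, and this is where the Disintegration Theorem \cite{muhwil:nyjm08}*{Theorem~7.12} carries the weight; I would set up a correspondence of representations, postponing the measure-theoretic details. Given $M\in\Rep(\Eu,A)$ I would disintegrate it into a covariant representation $(\mu,X*\H,\pi,V)$ of $(\A,\Eu,\omega)$, bundle the Hilbert spaces $\H(x)$ over the fibres of $s_{X}$ by setting $\K(v)=\int^{\oplus}_{s\inv(v)}\H(x)\,d\mu_{v}(x)$ (where $\mu=\int\mu_{v}\,d\nu(v)$ disintegrates $\mu$ over $\nu=s_{X*}\mu$), restrict $V$ to $\Gu$ to obtain a fibrewise representation $\rho$ of $\B$, and assemble the operators $V_{(r(x),x,\eta)}$ into unitaries $W_{\eta}$ implementing $\beta$; the integrated form of $(\nu,\ho*\K,\rho,W)$ is then an $L\in\Rep(H,B)$ with $L\circ\Upsilon=M$, giving $\|\Upsilon(f)\|\ge\|f\|$. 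Running this in reverse --- disintegrating $L\in\Rep(H,B)$, disintegrating each fibre representation $\rho_{v}$ of $B(v)=\A\restr{s\inv(v)}\rtimes\Gu\restr{s\inv(v)}$ over the single $G$-orbit $s\inv(v)$, and reassembling measurably in $v$ --- produces $M\in\Rep(\Eu,A)$ with $M=L\circ\Upsilon$, giving $\|\Upsilon(f)\|\le\|f\|$. The genuinely delicate points are the compatibility of the quasi-invariant measures $\mu$ and $\nu$ under $s_{X}$ and the matching of the associated Radon--Nikodym cocycles; I expect this bookkeeping to be the main obstacle, which is why I would relegate it to a separate section.

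It then remains to see that $\Upsilon$ has dense range, so that, being isometric, it is an isomorphism onto $\B\rtimes_{\beta}H$ (injectivity being automatic from isometry). It suffices to approximate every element of $\Gamma_{c}(H,r^{*}\B)$ in the inductive limit topology by sections $\Upsilon(f)$. Since the prototypical sections $v\mapsto F_{v}$ with $F\in\Gamma_{c}(\Gu,r^{*}\A)$ are total in $\B$, and such $F$ together with $C_{c}(H)$ generate $\Gamma_{c}(H,r^{*}\B)$ under the module structure, a partition-of-unity argument --- using the Tietze extension theorem for upper semicontinuous bundles already invoked in the proof of Proposition~\ref{prop:3} --- reduces the task to exhibiting, for each such generator, an $f\in\Gamma_{c}(\Eu,r^{*}\A)$ with $\Upsilon(f)$ uniformly close to it, which is immediate from the defining formula $\Upsilon(f)(\eta)(\gamma,x)=f(\gamma,x,\eta)$.
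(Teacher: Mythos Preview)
Your forward construction --- disintegrating a representation of $\A\rtimes_{\omega}\Eu$, bundling the Hilbert spaces along the fibres of $s_{X}$, and assembling the restricted unitaries into a covariant pair for $(\B,H,\beta)$ --- is precisely what the paper does in Proposition~\ref{prop-finish}, and the measure-theoretic bookkeeping you anticipate (quasi-invariance of $s_{X*}\mu$, compatibility of the modular functions) is indeed the substance of Section~\ref{sec:proof-that-upsilon}. For the inequality $\|f\|\le\|\Upsilon(f)\|$ you and the paper are aligned.

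Where you diverge is in the other inequality. Your ``reverse'' step --- disintegrating $L\in\Rep(H,B)$ into a field $\{\rho_{v}\}$, then disintegrating each $\rho_{v}$ as a covariant representation of $\Gu\restr{s\inv(v)}$, then gluing --- has a real gap: the Disintegration Theorem determines the data $(\mu_{v},\,s\inv(v)*\HH_{v},\,\pi^{v},\,V^{v})$ only up to equivalence, not canonically, so ``reassembling measurably in $v$'' demands a Borel selection of these fibrewise disintegrations across $v\in\ho$ in order to obtain a Borel Hilbert bundle over $X$ and a Borel homomorphism on all of $\Eu$. That selection is not supplied by the standard machinery and would need its own argument; the forward and reverse directions are \emph{not} symmetric here. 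The paper sidesteps this entirely. Once one checks (by a short uniform estimate) that $\Upsilon$ is continuous for the inductive limit topologies, then for any faithful $L$ on $\B\rtimes_{\beta}H$ the composite $L\circ\Upsilon$ is an inductive-limit-continuous $*$-representation of $\sa_{c}(\Eu,r^{*}\A)$ and hence, by Remark~\ref{rem-separable-basic}(d) (i.e., the Disintegration Theorem), already lies in $\Rep(\Eu,A)$; this gives $\|\Upsilon(f)\|=\|L(\Upsilon(f))\|\le\|f\|$ immediately, without ever manufacturing an $\Eu$-covariant representation out of $L$. Thus only the forward representation-theoretic construction is required, and the asymmetry is exactly what lets one avoid the measurable-selection problem your reverse step runs into.
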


The proof of Theorem~\ref{thm:isomorphism} is  involved, and we divide
it up into a series of propositions.  We begin by showing that
$\Upsilon$ is a $*$-homomorphism.  Although this assertion is
considerably easier than the assertion that the map is injective, even
this part of the result is technical and far from immediate.  Showing that
$\Upsilon$ is isometric is subtle and requires 
delicacies with unitary groupoid representations that we 
address in a separate section.

\begin{prop}
  \label{prop:4}
  The map $\Upsilon$ defined in Theorem~\ref{thm:isomorphism} extends
  to a $*$-homomorphism from $\A\rtimes_{\omega} E$ onto
  $\B\rtimes_{\beta} H$.
\end{prop}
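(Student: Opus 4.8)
The plan is to verify that $\Upsilon$ carries the dense $*$-subalgebra $\Gamma_c(\Eu,r^*\A)$ into $\Gamma_c(H,r^*\B)$ as a $*$-algebra homomorphism, to show that it is continuous for the inductive limit topologies and has dense range, and then to bound it in $\cs$-norm by composing with the representations of $\B\rtimes_\beta H$. Once the estimate $\|\Upsilon(f)\|\le\|f\|$ is in hand, $\Upsilon$ extends to a $*$-homomorphism of the completions; since a $*$-homomorphism of $\cs$-algebras has closed range, density of the range will then force surjectivity.

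The algebraic identities are a direct if lengthy computation. To see that $\Upsilon(f*g)=\Upsilon(f)*\Upsilon(g)$ one unwinds the convolution on $\Eu$, whose Haar system is $\lambda^{r(x)}\times\delta_x\times\sigma^{s(x)}$, and the convolution on $\B\rtimes_\beta H$, whose fibrewise products are the convolutions of $B(v)=\A\restr{s\inv(v)}\rtimes\Gu\restr{s\inv(v)}$. Feeding in the definitions $\alpha_{(\gamma,x)}=\omega_{(\gamma,x,s(x))}$ and $\beta_\eta(b)(\gamma,x)=\omega_{(r(x),x,\eta)}\bigl(b(\gamma,x\cdot\eta)\bigr)$, the product $\Upsilon(f)*\Upsilon(g)$ acquires a composite $\omega_{(\gamma,x,s(x))}\circ\omega_{(r(y),y,\eta)}$ with $y=\gamma\inv\cdot x$, which the cocycle identity collapses to the single factor $\omega_{(\gamma,x,\eta)}$ occurring in $f*g$; the two expressions then coincide after a Fubini interchange of the $G$- and $H$-integrations. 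The identity $\Upsilon(f^*)=\Upsilon(f)^*$ follows in the same way from the cocycle relation $\omega_{(r(x),x,\eta)}\circ\omega_{(\gamma,x\cdot\eta,s(x\cdot\eta))}=\omega_{(\gamma,x,\eta)}$.

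The subtle point, and the step I expect to be the main obstacle, is showing that $\Upsilon(f)$ is genuinely an element of $\Gamma_c(H,r^*\B)$, that is, a continuous compactly supported section of $\B$. Compact support is immediate, since $\supp\Upsilon(f)\subseteq\pi_H(\supp f)$. For continuity, $\B$ is second countable by Remark~\ref{rem-separable}, so it suffices to prove $\Upsilon(f)(\eta_i)\to\Upsilon(f)(\eta_0)$ in $\B$ whenever $\eta_i\to\eta_0$. I would use the Tietze Extension Theorem for upper semicontinuous bundles \cite{muhwil:dm08}*{Proposition~A.5} to extend the section $f(\cdot,\cdot,\eta_0)$ on $\Gu\restr{s\inv(r(\eta_0))}$ to some $F\in\Gamma_c(\Gu,r^*\A)$ with $F_{r(\eta_0)}=\Upsilon(f)(\eta_0)$, and then estimate the fibre $\cs$-norm of $\Upsilon(f)(\eta)-F_{r(\eta)}$ by its $\|\cdot\|_I$-norm, and hence by a fixed multiple of $\sup_{(\gamma,x)}\|f(\gamma,x,\eta)-F(\gamma,x)\|$ (the multiple being controlled because all supports lie in a common compact set, so the Haar measures of the relevant slices stay bounded). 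A compactness-and-contradiction argument, exactly in the spirit of the proof of Proposition~\ref{prop:3} and using only the joint continuity of $f$ and $F$, shows this supremum tends to $0$; then \cite{wil:crossed}*{Proposition~C.20} yields $\Upsilon(f)(\eta)\to\Upsilon(f)(\eta_0)$ in $\B$. The very same $\|\cdot\|_I$-estimate, applied to differences $f_i-f_0$, shows that $\Upsilon$ carries inductive-limit convergent nets to inductive-limit convergent nets, so $\Upsilon$ is continuous for the inductive limit topologies.

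It remains to obtain the norm bound and surjectivity. First I would observe that $\Upsilon$ has dense range: the sections $\eta\mapsto\phi(\eta)F_{r(\eta)}$ with $\phi\in C_c(H)$ and $F\in\Gamma_c(\Gu,r^*\A)$ span an inductive-limit dense subspace of $\Gamma_c(H,r^*\B)$, and each such section is exactly $\Upsilon(f)$ for $f(\gamma,x,\eta):=\phi(\eta)F(\gamma,x)\in\Gamma_c(\Eu,r^*\A)$. Now fix $L\in\Rep(H,B)$. Since $L$ is continuous from the inductive limit topology to the weak-$*$ topology (Remark~\ref{rem-separable-basic}(d)) and $\Upsilon$ is inductive-limit continuous with dense range, $L\circ\Upsilon$ is a $*$-homomorphism of $\Gamma_c(\Eu,r^*\A)$ that is continuous from the inductive limit topology to the weak-$*$ topology, and it is nondegenerate because $L$ is nondegenerate and $\Upsilon$ has dense range; hence $L\circ\Upsilon\in\Rep(\Eu,A)$ by Remark~\ref{rem-separable-basic}(d). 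Therefore $\|L(\Upsilon(f))\|=\|(L\circ\Upsilon)(f)\|\le\|f\|$ for every $f$, and taking the supremum over $L\in\Rep(H,B)$ gives $\|\Upsilon(f)\|\le\|f\|$. Thus $\Upsilon$ extends to a $*$-homomorphism $\A\rtimes_\omega\Eu\to\B\rtimes_\beta H$ whose range is closed and contains the dense set $\Upsilon(\Gamma_c(\Eu,r^*\A))$, and is therefore all of $\B\rtimes_\beta H$.
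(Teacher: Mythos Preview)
Your strategy matches the paper's: show $\Upsilon$ is a $*$-homomorphism on $\Gamma_c(\Eu,r^*\A)$, establish inductive-limit continuity, invoke the Disintegration Theorem (Remark~\ref{rem-separable-basic}(d)) to get $\|\Upsilon(f)\|\le\|f\|$, and argue density of the range. Your continuity argument via a single Tietze extension plus an $\|\cdot\|_I$-estimate and \cite{wil:crossed}*{Proposition~C.20}, and your surjectivity argument via the elementary sections $\eta\mapsto\phi(\eta)F_{r(\eta)}$, are organized differently from the paper's case-split and its appeal to \cite{wil:crossed}*{Proposition~C.24}, but both routes are sound.

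There is one point you understate. You describe the multiplicativity check as settled by ``a Fubini interchange of the $G$- and $H$-integrations,'' but the real issue lies one step earlier: $\Upsilon(f)*\Upsilon(g)(\eta)$ is \emph{a priori} a $B(r(\eta))$-valued integral
\[
\int_H \Upsilon(f)(\zeta)*\beta_\zeta\bigl(\Upsilon(g)(\zeta^{-1}\eta)\bigr)\,d\sigma^{r(\eta)}(\zeta),
\]
and to compare it with $\Upsilon(f*g)(\eta)\in B_0(r(\eta))$ you must first know that this integral lands in $B_0(r(\eta))=\Gamma_c(\Gu\restr{s^{-1}(r(\eta))},r^*\A)$ and that evaluation at $(\gamma,x)$ commutes with it. Since point-evaluation is not even defined on all of $B(r(\eta))$, let alone bounded, this is not automatic and is not a Fubini theorem. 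The paper singles out exactly this step (its equation~\eqref{eq:6}) and dispatches it by citing \cite{wil:crossed}*{Lemma~1.108}, the standard device for recognizing that a vector-valued integral of a jointly continuous compactly supported integrand is again a continuous section, evaluable pointwise. Once that is in hand, your cocycle computation and the (now genuinely applicable) Fubini swap finish the job.
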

\begin{proof}
  If $f\in\sa_{c}(\Eu,r^{*}\A)$, then 
  $\Upsilon(f)(\eta)\in B_{0}(r(\eta))$.  Furthermore $\Upsilon(f)$ is a
  compactly supported section of $r^{*}\B$.  Thus to see that
  $\Upsilon$ maps into $\B\rtimes_{\beta}H$ we need to show that
  $\eta\mapsto \Upsilon(f)(\eta)$ is continuous.  The
  topology on $\B$ is determined by the sections coming from
  $\sa_{c}(\Gu,r^{*}\A)$, and is second countable
  (Remark~\ref{rem-separable}).  Hence we can proceed as in
  Proposition~\ref{prop:3}.  We just sketch the details.

  Suppose $\eta_i \to \eta_0$ in $H$.  Replacing
  $\sset{\Upsilon(f)(\eta_{i})}$ by a subsequence, it suffices to see
  that it has a subsequence converging to $\Upsilon(f)(\eta_{0})$.  If
  $r(\eta_i) = r(\eta_0)$ infinitely often, then after passing to a
  subsequence, we may assume $\Upsilon(f)(\eta_i)\in B(r(\eta_0))$ for
  all $i$.  Since we are dealing with a fixed fibre it suffices to
  show that $\|\Upsilon(f)(\eta_n)-\Upsilon(f)(\eta_0)\|\to
    0$ and this follows from a
  standard argument.  Alternatively, if we eventually have
  $r(\eta_i)\ne r(\eta_0)$ then we may pass to a subsequence and
  assume $r(\eta_i) \ne r(\eta_j)$ for all $i\ne j$.  Next, as in the
  proof of Proposition \ref{prop:3}, we build a continuous function
  $F_0\in\sa_{c}(\Gu,r^{*}\A)$ with the property that the restriction
  of $F_0$ to $s\inv(r(\eta_i))$ is equal to $\Upsilon(f)(\eta_i)$ for
  all $i \geq 0$.  Since $F_0$ defines a continuous section
    $v\mapsto F_0|_{s\inv(v)}$, we have
    $\Upsilon(f)(\eta_i)=F_0|_{s\inv(r(\eta_i))}\to
    F_0|_{s\inv(r(\eta_0))}=\Upsilon(f)(\eta_0)$.   In either
  case $\Upsilon(f)$ is a continuous, compactly supported section.

  The next step is to see that $\Upsilon$ is a $*$-homomorphism on
  $\sa_{c}(\Eu,r^{*}\A)$.  This is mostly routine, but some care is
  required to see that it is multiplicative:
  $\Upsilon(f*g)=\Upsilon(f)*\Upsilon(g)$.  The issue is that \emph{a
    priori} $\Upsilon(f)*\Upsilon(g)(\eta)$ is the element of
  $B(r(\eta)) = \A\restr{s^{-1}(r(\eta))} \rtimes_{\alpha}
  \Gu\restr{s^{-1}(r(\eta))}$ given by the \emph{$B(r(\eta))$-valued}
  integral
  \begin{equation*}
    \int_{H}\Upsilon(f)(\zeta)*\beta_{\zeta}\bigl(\Upsilon(g)(\zeta^{-1}\eta)\bigr)
    \,d\lh^{r(\eta)}(\zeta). 
  \end{equation*}
  We claim that first, $\Upsilon(f)*\Upsilon(g)(\eta)$ belongs to
  $B_{0}(r(\eta))$, and second that
  \begin{multline}
    \label{eq:6}
    \int_{H}\Upsilon(f)(\zeta)*\beta_{\zeta}\bigl(\Upsilon(g)(\zeta^{-1}\eta)\bigr)
    \,d\lh^{r(\eta)}(\zeta)(\gamma,x) =\\ \int_{H}
    \Upsilon(f)(\zeta)*\beta_{\zeta}\bigl(\Upsilon(g)
    (\zeta^{-1}\eta)\bigr)(\gamma,x) 
    \,d\lh^{r(\eta)}(\zeta).
  \end{multline}
  This will suffice since a routine computation shows that the
  $A(x)$-valued integral on the right-hand side of \eqref{eq:6}
  simplifies to
  \begin{equation*}
    f*g(\gamma,x,\eta)=\Upsilon(f*g)(\eta)(\gamma,x).
  \end{equation*}
  However both claims follow just as in
  \cite{wil:crossed}*{Lemma~1.108}, and $\Upsilon$ is a
  $*$-homomorphism as claimed.

  To show that $\Upsilon$ is bounded with respect to the \cs-norms, we
  note that it suffices to prove that $\Upsilon$ is continuous with
  respect to the inductive limit topologies.  Then if $L$ is a
  faithful representation of $\B\rtimes_{\beta}H$, $L\circ \Upsilon$
  is a bounded representation of $\A\rtimes_{\omega}\Eu$ by
  \cite{muhwil:nyjm08}*{Theorem~7.12}, and we have
  \begin{equation*}
    \|f\|\le \|L\circ\Upsilon(f)\|=\|\Upsilon(f)\|.
  \end{equation*}

  To see that $\Upsilon$ is continuous in the inductive limit
  topologies, suppose that $f_i \to f_0$ with respect to the inductive
  limit topology in $\Gamma_c(E,r^*\A)$ and let $K$ be the compact set
  which eventually contains the supports of the $f_i$.  Pick
  $\epsilon>0$.
  Let $K_{G}$ be
  the restriction of $K$ to $G$ and observe that $K_{G}$ is compact so
  that the set $\{\lambda^u(K_{G}),\lambda_u(K_{G})\}_{u\in \go}$ is
  bounded by some $M$.  Pick $i_0$ so that $\|f_i-f_0\|_\infty <
  \epsilon/M$ for all $i> i_0$.  Then given any $\eta\in H$ and $x\in
  X$ such that $s(x) = r(\eta)$ we have
  \begin{align*}
    \int \|f_i(\gamma,x,\eta) - f_0(\gamma,x,\eta)\|
    d\lambda^{r(x)}(\gamma)
    \leq \|f_i-f_0\|_\infty \lambda^{r(x)}(K_{G}) < \epsilon, \quad\text{and}\\
    \int \|f_i(\gamma,x,\eta) - f_0(\gamma,x,\eta)\|
    d\lambda_{r(x)}(\gamma)
    \leq \|f_i-f_0\|_\infty \lambda_{r(x)}(K_{G}) < \epsilon. \\
  \end{align*}
  It follows that for all $i > i_0$ we have
  \begin{equation*}
    \|\Upsilon(f_i)(\eta)-\Upsilon(f_0)(\eta)\| \leq
    \|\Upsilon(f_i)(\eta)-\Upsilon(f_0)(\eta)\|_I < \epsilon.
  \end{equation*}
  Consequently, $\Upsilon(f_i)\to\Upsilon(f_0)$ uniformly and, since
  the supports of the $\Upsilon(f_i)$ are eventually contained in the
  restriction of $K$ to $H$, this convergence occurs with respect to
  the inductive limit topology.

  It only remains to see that $\Upsilon$ is surjective.  But
  $\set{\Upsilon(f)(\eta):f\in\sa_{c}(\Eu,r^{}\A)}$ is dense in
  $B(r(\eta))$ and it follows from
  \cite{wil:crossed}*{Proposition~C.24} that
  $\Upsilon(\sa_{c}(\Eu,r^{*}\A))$ is dense in $\sa_{c}(H,r^{*}\B)$ in
  the supremum norm.  But then a compactness argument implies the
  image of $\Upsilon$ is dense in the inductive limit topology.  Hence
  $\Upsilon$ has dense image and is necessarily onto.
\end{proof}

Thus to prove Theorem~\ref{thm:isomorphism}, we just need to see that
$\Upsilon$ is isometric.  The idea of the proof is elementary: if $R$
is a representation of $\A\rtimes_{\omega}\Eu$, then we want to show
that it \emph{factors through} $\B\rtimes_{\beta}H$.  More precisely,
we will prove the following.
\begin{prop}
  \label{prop-finish}
  If $R$ is a representation of $\A\rtimes_{\omega}\Eu$, then there is
  a representation $L$ of $\B\rtimes_{\beta}H$ such that $L\circ
  \Upsilon$ is equivalent to $R$.
\end{prop}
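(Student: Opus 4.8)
The plan is to disintegrate $R$ into a covariant representation of $(\A,\Eu,\omega)$, reorganize that data over $\ho$ into a covariant representation of $(\B,H,\beta)$, and take its integrated form for $L$. Applying the Disintegration Theorem \cite{muhwil:nyjm08}*{Theorem~7.12}, we may assume, after passing to an equivalent representation, that $R$ is the integrated form of a covariant representation $(\mu,X*\H,\pi,U)$: here $\mu$ is a quasi-invariant measure on $\eo=X$ with modular function $\Delta_\Eu$, $\{\pi_x\}$ represents $\A$ on a Borel Hilbert bundle $\H$ over $X$, and $U=\{U_{(\gamma,x,\eta)}\}$ is a unitary representation of $\Eu$ with $U_{(\gamma,x,\eta)}\pi_{s(\gamma,x,\eta)}(a)U_{(\gamma,x,\eta)}^{*}=\pi_{x}(\omega_{(\gamma,x,\eta)}(a))$, so that for $f\in\Gamma_c(\Eu,r^*\A)$ and $\xi\in L^{2}(X,\mu;\H)$,
\begin{multline*}
  (R(f)\xi)(x)=\int_{G^{r(x)}}\int_{H^{s(x)}}\pi_{x}\bigl(f(\gamma,x,\eta)\bigr)U_{(\gamma,x,\eta)}\xi(\gamma\inv\cdot x\cdot\eta)\\
  \times\Delta_\Eu(\gamma,x,\eta)^{-1/2}\,d\sigma^{s(x)}(\eta)\,d\lambda^{r(x)}(\gamma),
\end{multline*}
where we have used $\lambda_{\Eu}^{x}=\lambda^{r(x)}\times\delta_{x}\times\sigma^{s(x)}$.

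First I would slice the data over $\ho$. Since $s_X$ factors through a homeomorphism $G\backslash X\cong\ho$, I would push $\mu$ forward to a measure $\nu$ on $\ho$ and disintegrate $\mu=\int_{\ho}\mu_{v}\,d\nu(v)$, with each $\mu_{v}$ carried by the orbit $s\inv(v)$ and quasi-invariant for the slice groupoid $\Gu\restr{s\inv(v)}=G\ltimes s\inv(v)$. This yields a Fubini isomorphism $W\colon L^{2}(X,\mu;\H)\to L^{2}(\ho,\nu;\KK)$, where $\KK$ is the Hilbert bundle over $\ho$ with fibre $\KK(v)=L^{2}(s\inv(v),\mu_{v};\H)$. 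On each fibre I would take $\rho_{v}$ to be the integrated form of the pair $(\pi,U)$ restricted to $\Gu\restr{s\inv(v)}$; by Proposition~\ref{prop:1} this represents $B(v)=\A\restr{s\inv(v)}\rtimes\Gu\restr{s\inv(v)}$ on $\KK(v)$, and the $\rho_{v}$ assemble to a representation $\rho$ of $\B$. For the outer part I would set, for $\eta\in H$, $\zeta\in\KK(s(\eta))$ and $x\in s\inv(r(\eta))$,
\begin{equation*}
  (V_{\eta}\zeta)(x)=U_{(r(x),x,\eta)}\bigl(\zeta(x\cdot\eta)\bigr)\,\kappa(\eta,x)^{1/2},
\end{equation*}
where $\kappa(\eta,\cdot)$ is the Radon--Nikodym derivative relating $\mu_{s(\eta)}$ and $\mu_{r(\eta)}$ under right translation by $\eta$; note $x\cdot\eta\in s\inv(s(\eta))$ and $U_{(r(x),x,\eta)}\colon\H(x\cdot\eta)\to\H(x)$, so $V_{\eta}\zeta\in\KK(r(\eta))$.

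Next I would check that $(\nu,\KK,\rho,V)$ is a covariant representation of $(\B,H,\beta)$. The cocycle identity for $U$ and the choice of $\kappa$ make $\eta\mapsto V_{\eta}$ a unitary representation of $H$; quasi-invariance of $\mu$ for $\Eu$ forces $\nu$ to be quasi-invariant for $H$; and the covariance relation $V_{\eta}\rho_{s(\eta)}(b)V_{\eta}^{*}=\rho_{r(\eta)}(\beta_{\eta}(b))$ reduces, on the dense subalgebra $B_{0}(s(\eta))$, to the defining formula $\beta_{\eta}(g)(\gamma,x)=\omega_{(r(x),x,\eta)}(g(\gamma,x\cdot\eta))$ together with the $\Eu$-covariance of $(\pi,U)$. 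Letting $L$ be the integrated form of $(\nu,\KK,\rho,V)$ gives the desired representation of $\B\rtimes_{\beta}H$, and it remains to verify $WR(f)W^{*}=L(\Upsilon(f))$ for $f\in\Gamma_c(\Eu,r^*\A)$. Unwinding $L(\Upsilon(f))$ as an iterated integral over $H^{v}$ and $G^{r(x)}$, the key point is that the two unitaries collapse via $U_{(\gamma,x,s(x))}\,U_{(r(\gamma\inv\cdot x),\gamma\inv\cdot x,\eta)}=U_{(\gamma,x,\eta)}$; comparing with the displayed formula for $R(f)$ after Fubini then yields the identity, \emph{provided} the modular factors $\Delta_{\Gu,v}(\gamma,x)^{-1/2}$, $\Delta_H(\eta)^{-1/2}$ and $\kappa(\eta,\cdot)^{1/2}$ multiply out to $\Delta_\Eu(\gamma,x,\eta)^{-1/2}$.

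I expect the main obstacle to be precisely this measure-theoretic bookkeeping: producing the disintegration $\mu=\int_{\ho}\mu_{v}\,d\nu(v)$ with measurable fibre field $v\mapsto\KK(v)$ and measurable $\eta\mapsto V_{\eta}$, establishing the quasi-invariance of $\nu$, and above all proving the chain rule for the modular functions that reconciles $\Delta_\Eu$ with $\Delta_{\Gu,v}$, $\Delta_H$ and $\kappa$. These must be controlled well enough that the reorganized data meets the hypotheses of the Disintegration Theorem, so that $L$ is a genuine $\|\cdot\|_{I}$-bounded representation. Once this is in place the Fubini identity $WR(f)W^{*}=L(\Upsilon(f))$ is formal, and the isometry of $\Upsilon$ --- hence Theorem~\ref{thm:isomorphism} --- follows at once, since $\|f\|=\sup_{R}\|R(f)\|=\sup_{R}\|L(\Upsilon(f))\|\le\|\Upsilon(f)\|\le\|f\|$.
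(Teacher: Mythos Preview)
Your outline is correct and follows essentially the same route as the paper: disintegrate $R$ over $X$, push forward to $\tau=s_{*}\mu$ on $\ho$, decompose $\mu=\int\mu_{v}\,d\tau(v)$ to build the bundle $\KK$ and the fibre representations of $B(v)$, define the $H$-unitaries with the Radon--Nikodym correction $\theta(x,\eta)=\Delta^{\Hu}_{\mu}(x,\eta)/\Delta^{H}_{\tau}(\eta)$ (your $\kappa$), verify covariance, and check the intertwining via the modular-function identity $\Delta^{\Gu}_{\mu}(\gamma,x)\Delta^{\Hu}_{\mu}(\gamma^{-1}\cdot x,\eta)=\Delta^{\Eu}_{\mu}(\gamma,x,\eta)$. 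The paper's one nontrivial device you would still need to supply is the proof that the disintegrated measures transform correctly under $H$ (your $\kappa$ exists and equals $\theta$): the paper obtains this by manufacturing a Borel section $e$ of $X*\HH$ that is $U_{\Hu}$-invariant (via a Borel cross-section for $X\to X/H$) and plugging it into the representation $W$ to strip the Hilbert-space data and leave a pure measure identity.
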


Of course, once we've proved Proposition~\ref{prop-finish}, it follows
easily that $\Upsilon$ is isometric: If $R$ is a faithful representation
of $\A\rtimes_{\omega}\Eu$, then
\begin{equation*}
  \|f\|=\|R(f)\|=\|L\circ \Upsilon(f)\|\le \|\Upsilon(f)\|.
\end{equation*}
Since Proposition~\ref{prop:4} implies $\|\Upsilon(f)\|\le\|f\|$, this
completes the proof of Theorem~\ref{thm:isomorphism}.  

However our
proof of Proposition~\ref{prop-finish} requires that we work with
covariant representations of groupoid dynamical systems, and unitary
groupoid representations in particular.  While the corresponding
details in the group case are straightforward, working out the
niceties for groupoids is extremely subtle.  We do this in the next
sections after briefly reviewing the necessary definitions.


\section{Proof of Proposition~\ref{prop-finish}}
\label{sec:proof-that-upsilon}

We refer to \cite{wil:crossed}*{Appendix~F} for background on Borel
Hilbert bundles and \cite{muhwil:nyjm08}*{\S7} for background on
covariant representations of groupoid dynamical systems.  For
convenience, we review some of the basic concepts here.

\subsection{Covariant Representations}
\label{sec:covar-repr}

Let $\HH=\sset{\H(y)}_{y\in Y}$ be a collection of Hilbert spaces
indexed by an analytic Borel space $Y$.  The disjoint union $Y*\HH$,
viewed as a bundle $p:Y*\HH\to Y$ in the obvious way, is called a
\emph{Borel Hilbert bundle} if it has a natural Borel structure
respecting the Hilbert space structure on the $\H(y)$.  (For a
precise statement, see \cite[Definition~F.1]{wil:crossed}.)  By
\cite[Proposition~F.6]{wil:crossed} there exists a sequence of
sections $e_i:Y\to Y*\HH$ called \emph{a special orthogonal
  fundamental sequence} such that $\set{e_i(x): e_i(x)\neq 0}$ is an
orthonormal basis for $\H(x)$ and such that $h:Y\to Y*\HH$ is Borel if
and only if the maps $x\mapsto \bip(h(x)|{ e_i(x)})_{\H(x)}$ are Borel for
all $i$.  If $\mu$ is a measure on $X$ then $(h,k)\mapsto \int_Y
\bip(h(x)|{k(x)})_{\H(x)} d\mu(x)$ defines an inner product on the bounded
Borel sections of $Y*\HH$.  We denote the Hilbert space completion of
these sections by $L^2(Y*\HH,\mu)$. Given a Borel Hilbert bundle
$Y*\HH$, its isomorphism groupoid $\Iso(Y*\HH)$ is the set
$\{(x,V,y): \text{$V$ a unitary from $H(y)$ to $H(x)$}\}$ with
multiplication $(x,V, y)(y,U, z)=(x,VU, z)$ and the weakest Borel
structure such that $(x,V,y)\mapsto \bip(Vh(y)|{k(x)})$ is Borel for all bounded Borel
sections $h$ and $k$.

Let $S$ be a locally compact Hausdorff groupoid with Haar system
$\{\kappa^u\}_{u\in S^{(0)}}$.  Let $\mu$ be a Radon measure on
$S^{(0)}$ and define $\nu= \mu \circ \kappa := \int_{S^{(0)}} \kappa^u d\mu(u)$.  We say $\mu$ is \emph{quasi-invariant} if $\nu$ is
equivalent to its image under inversion
\cite[Definition~3.2]{ren:groupoid}. We define the \emph{modular
  function} $\Delta:=\Delta^{S}_{\mu}$ to be the Radon-Nikodym
derivative of $\nu$ with respect to its image under inversion.  We
will use the fact that the modular function can be taken to be
multiplicative (see \cite{muhwil:nyjm08}*{Remark~7.1}). If $\mu$ is an
arbitrary Radon measure on $S^{(0)}$ and $\nu_0$ is a finite measure
on $S$ equivalent to $\nu=\mu\circ\kappa$, then we define the
saturation of $\mu$ to be the push-forward $[\mu]=s_*\nu_0$.\footnote{Note
  that the saturation depends on our choice of $\nu_{0}$ and so is
  well-defined only up to equivalence of measures.}  It is shown in
\cite[Proposition~3.6]{ren:groupoid} that the saturation $[\mu]$ is
quasi-invariant, and if $\mu$ is quasi-invariant to begin with, then
$\mu$ is equivalent to $[\mu]$.

Let $(\CC,S,\vartheta)$ be a groupoid dynamical system.  Following
\cite[Definition~7.9]{muhwil:nyjm08}, a covariant representation
$(\pi, U, S^{(0)}*\HH, \mu)$ of $(C,S,\vartheta)$ consists of a Borel
Hilbert bundle $S^{(0)}*\HH$ over $S^{(0)}$, a quasi invariant measure
$\mu$, a Borel field of representations $\pi_u: C(u)\to H(u)$ and a
Borel homomorphism $U:S\to \Iso(S\unit*\HH)$ that satisfies the
\emph{covariance condition}: there is a $\nu$-null set $N$
such that for all $\gamma\notin N$
\begin{equation}\label{eq:covariant}
  U_\gamma
  \pi_{s(\gamma)}(b)=\pi_{r(\gamma)}(\vartheta_\gamma(b))U_\gamma\quad\text{for
    all $b\in C(s(\gamma))$.} 
\end{equation}
It will be convenient to recall that if
$(\pi, U, S^{(0)}*\HH, \mu)$ is covariant, then there is a
$\mu$-conull set $V\subset S\unit$ such that the covariance condition
\eqref{eq:covariant} holds for all $\gamma\in S\restr V$
\cite{muhwil:nyjm08}*{Remark~7.10}.  (Notice that $S\restr V$ is
$\nu$-conull.)

By \cite[Proposition~7.11]{muhwil:nyjm08}, each covariant
representation $(\pi, U, S^{(0)}*\HH, \mu)$ determines a
representation $\pi\rtimes U$ of $\CC\rtimes_{\vartheta}S$ on
$L^2(S^{(0)}*\HH,\mu)$ (called \emph{the integrated form}) such that
\begin{equation*}
  \pi\rtimes U(f)h(u)=\int_G \pi_u(f(\gamma))U_\gamma
  h(s(\gamma))\Delta^{S}_{\mu}(\gamma)^{-1/2} d\kappa^u(\gamma) 
\end{equation*}
for $f\in\sa_{c}(S,r^{*}\CC)$ and $h\in L^2(S^{(0)}*\HH,\mu)$.  It suffices to define $U$ only on a restriction
of the form $S\restr V$ where $V$ is a $\mu$-conull set in $S\unit$.
By
\cite[Theorem~7.12]{muhwil:nyjm08} every representation of
$\CC\rtimes_\vartheta S$ is equivalent to the integrated form of a
covariant representation.

\subsection{A \boldmath Representation of $B$}
\label{sec:quasi-invariance}

Let $(\pi,U,X*\HH,\mu)$ be a covariant representation of
$(A,\Eu,\omega)$.  The first step will be to build a covariant
representation of $(\A,\Gu,\alpha)$.  We already have the Borel
Hilbert bundle $X*\HH$ and $\pi$ is already a Borel field of
representations of $\A$.  The restriction of $U$ from $\Eu$ to $\Gu $,
which we shall denote $U_{\Gu}$, is still a Borel homomorphism into
$\Iso(X*\HH)$.  Furthermore, since $U_{\Gu}$ and $\alpha$ are the
restrictions of $U$ and $\omega$ to $\Gu $, respectively, there is a
$\mu$-conull set $V\subset X$ such that the covariance relation
between $U_{\Gu}$ and $\pi$ holds on $\Gu\restr V$.  Thus it will follow
that $(\pi,U_{\Gu}, X*\HH,\mu)$ is a covariant representation of
$(\A,\Gu,\alpha)$ provided we can show that $\mu$ is quasi-invariant
with respect to $\Gu$.

\begin{prop}
  \label{prop:5}
  Let $\mu$ be a quasi-invariant measure on $X$ with respect to $\Eu$.
  \begin{enumerate}
  \item\label{quasi trans} Then $\mu$ is quasi-invariant with respect
    to $\Gu $ and $\Hu$.
  \item \label{quasi push} The push forward measure $\tau=s_*\mu$ on
    $H\unit$ is quasi-invariant with respect to~$H$.
  \end{enumerate}
\end{prop}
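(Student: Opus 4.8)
The plan is to regard part~\ref{quasi trans} as the real content and to deduce part~\ref{quasi push} from it. Write $\nu_{\Eu}=\mu\circ\lambda_{\Eu}$, $\nu_{\Gu}=\mu\circ\lambda_{\Gu}$ and $\nu_{\Hu}=\mu\circ\lambda_{\Hu}$ for the three measures on $\Eu$, $\Gu$ and $\Hu$; by definition $\mu$ is quasi-invariant for each of these groupoids exactly when the corresponding measure satisfies $\nu\sim\nu\inv$ (equivalence with its image under inversion). The hypothesis is $\nu_{\Eu}\sim\nu_{\Eu}\inv$, and we may fix a multiplicative modular function $\Delta:=\Delta^{\Eu}_{\mu}$.

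For part~\ref{quasi push}, observe that the projection $q\colon\Hu\to H$, $(x,\eta)\mapsto\eta$, satisfies $q_{*}\nu_{\Hu}=\tau\circ\sigma$, since both send $k$ to $\int_{X}\int_{H}k(\eta)\,d\sigma^{s(x)}(\eta)\,d\mu(x)$. Because inversion in $\Hu$ is $(x,\eta)\inv=(x\cdot\eta,\eta\inv)$, the map $q$ intertwines the inversions of $\Hu$ and $H$, whence $q_{*}\nu_{\Hu}\inv=(\tau\circ\sigma)\inv$. Thus the $\Hu$-quasi-invariance supplied by part~\ref{quasi trans} pushes forward to $\tau\circ\sigma\sim(\tau\circ\sigma)\inv$, which is exactly quasi-invariance of $\tau=s_{*}\mu$ for $H$. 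So everything reduces to part~\ref{quasi trans}, and by the $G\leftrightarrow H$ symmetry of the set-up it is enough to treat $\Gu$.

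To handle $\Gu$ I would try to show that $\Delta$ restricts to a modular function for $\Gu$, i.e. that $d\nu_{\Gu}/d\nu_{\Gu}\inv=\Delta\restr{\Gu}$ (which is multiplicative, being the restriction of a multiplicative homomorphism to the subgroupoid $\Gu\subset\Eu$). The mechanism is the product structure $\lambda_{\Eu}^{x}=\lambda_{\Gu}^{x}\times\sigma^{s(x)}$ together with the factorisation $(\gamma,x,\eta)=(\gamma,x)\cdot(\gamma\inv\cdot x,\eta)$ of each element of $\Eu$ as an element of $\Gu$ times one of $\Hu$: by Fubini $d\nu_{\Eu}=d\nu_{\Gu}(\gamma,x)\,d\sigma^{s(x)}(\eta)$. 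Since the $H$-fibres have infinite mass I would insert a cut-off $c$ on $X\times H$ with $\int_{H}c(x,\eta)\,d\sigma^{s(x)}(\eta)=1$ (available from freeness and properness of the $H$-action, for which $X/H\cong\go$), feed $F=c\cdot(\phi\circ\mathrm{pr}_{\Gu})$ into the quasi-invariance identity $\int F\,d\nu_{\Eu}=\int F(e\inv)\Delta(e)\inv\,d\nu_{\Eu}(e)$, and collapse the $H$-integration on the left to recover $\nu_{\Gu}(\phi)$.

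The crux is the right-hand side: after applying the inversion formula $(\gamma,x,\eta)\inv=(\gamma\inv,\gamma\inv\cdot x\cdot\eta,\eta\inv)$, the test function is evaluated at the twisted point $(\gamma\inv,\gamma\inv\cdot x\cdot\eta)$ rather than $(\gamma\inv,\gamma\inv\cdot x)$. To undo this I would change variables $x\mapsto x\cdot\eta$ along the $H$-orbits and match the resulting Jacobian with $\Delta\restr{\Gu}$ using multiplicativity $\Delta\bigl((\gamma,x)\cdot(\gamma\inv x,\eta)\bigr)=\Delta\restr{\Gu}(\gamma,x)\,\Delta\restr{\Hu}(\gamma\inv x,\eta)$. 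I expect this twist to be the main obstacle: unlike the group case — where one simply restricts the modular cocycle to the subgroup $\Ggr\times\{e\}$ — the unit space of $H$ is $\sigma$-null, so the $H$-variable cannot be localised at a unit and must be integrated out, which couples the $\Gu$- and $\Hu$-computations and makes an honest change of variables (rather than a bare restriction) unavoidable. Conceptually the result is forced because disintegrating $\mu$ over the free $\Eu$-orbits yields fibre measures in the Haar class, and the Haar class of an $\Eu$-orbit $G\cdot x\cdot H$ restricts to the Haar class on each $\Gu$-suborbit $G\cdot x$; the computation above is the concrete incarnation of this fact.
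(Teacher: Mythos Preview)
Your reduction of part~\ref{quasi push} to part~\ref{quasi trans} via the push-forward $q_{*}\nu_{\Hu}=\tau\circ\sigma$ is clean and correct. The difficulty is entirely in part~\ref{quasi trans}, and there your proposed computation has a genuine circularity.

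After feeding $F=c\cdot(\phi\circ\mathrm{pr}_{\Gu})$ into the $\Eu$-quasi-invariance identity, the right-hand side involves $\phi(\gamma\inv,\gamma\inv\cdot x\cdot\eta)$. Your plan is to substitute $x\mapsto x\cdot\eta$ inside the $\mu$-integral to remove the twist. But this substitution---equivalently, the change $(x,\eta)\mapsto(x\cdot\eta,\eta\inv)$ in $d\sigma^{s(x)}(\eta)\,d\mu(x)=d\nu_{\Hu}$---is precisely inversion in $\Hu$, and its ``Jacobian'' is the Radon--Nikodym derivative $d\nu_{\Hu}/d\nu_{\Hu}\inv$. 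The existence of that derivative \emph{is} $\Hu$-quasi-invariance of $\mu$, which by your own symmetry reduction is the very statement you are proving. Assuming in addition that this derivative equals $\Delta\restr{\Hu}$ (so that multiplicativity lets you ``match the Jacobian with $\Delta\restr{\Gu}$'') compounds the problem: that identification is the content of Lemma~\ref{lem:3}, proved only \emph{after} Proposition~\ref{prop:5}. The same obstruction appears if one tries a purely qualitative null-set argument: from $\nu_{\Eu}\inv(\tilde A)=0$ one obtains $g(x\cdot\eta)=0$ for $\mu$-a.e.\ $x$ and $\sigma^{s(x)}$-a.e.\ $\eta$, but passing to $g(x)=0$ $\mu$-a.e.\ again needs to know that $\mu$-null sets are preserved under $H$-translation.

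The paper avoids this by working not with $\mu$ but with its $\Eu$-saturation $[\mu]=s_{*}\nu_{0}$ (to which $\mu$ is equivalent). Expanding $d[\mu](x)$ as an integral over $\Eu$ builds the orbit-averaging into the measure itself, so that every subsequent change of variables takes place inside the left-invariant Haar system $\lambda$ on $G$ rather than inside $\mu$. This is exactly the concrete incarnation of the conceptual picture you sketch in your last sentence---disintegrating $\mu$ over $\Eu$-orbits into the Haar class---but your cut-off computation does not implement it; the saturation does.
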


\begin{proof}
  \eqref{quasi trans} Since $\mu$ is quasi-invariant on $X$ with
  respect to $\Eu$, it is equivalent to its saturation $[\mu]$ with
  respect to $\Eu$ \cite[Proposition~3.6]{ren:groupoid}.  So it will
  suffice to show that $[\mu]$ is quasi-invariant with respect to $\Gu
  $.  Let $\nu=\mu\circ\lambda_{\Eu}$ be the measure induced on
  $\Eu$ by $\mu$, $\nu_0$ a finite measure equivalent to $\nu$, and
  $d\nu/d\nu_0$ the (strictly positive) Radon-Nikodym derivative of
  $\nu$ and $\nu_0$.  By definition, $[\mu] = s_*\nu_0$.  To see
  that $[\mu]$ is quasi-invariant with respect to $\Gu$, it will
  suffice to see that if $f\in C^+_c(\Gu )$ is such that $\iint
  f(\gamma,x) \,d\lambda^{r(x)}(\gamma)\,d[\mu](x)=0$, then $\iint
  f(\xi\inv,\xi\inv \cdot x) \,d\lambda^{r(x)}(\xi)\,d[\mu](x)=0$.  Now,
  \begin{align*}
    0 &= \iint f(\gamma,x) \,d\lambda^{r(x)}(\gamma)\,d[\mu](x) \\
    &= \iint f(\gamma,\xi\inv \cdot x \cdot \eta)
    d\lambda^{s(\xi)}(\gamma)\,d\nu_0(\xi,x,\eta) \\
    &= \iiiint f(\gamma,\xi\inv \cdot x\cdot
    \eta)\frac{d\nu}{d\nu_0}(\xi,x,\eta)
    \,d\lambda^{s(\xi)}(\gamma)
    \,d\lambda^{r(x)}(\xi)\,d\sigma^{s(x)}(\eta)\,d\mu(x) 
    \\ 
    &= \iiiint f(\xi\inv\gamma,\xi\inv \cdot x\cdot \eta)
    \frac{d\nu}{d\nu_0}(\xi,x,\eta)
    \,d\lambda^{r(x)}(\gamma)
    \,d\lambda^{r(x)}(\xi)\,d\sigma^{s(x)}(\eta)\,d\mu(x) \\ 
    &= \iiiint f(\xi\inv, \xi\inv\gamma\inv \cdot x\cdot \eta)
    \frac{d\nu}{d\nu_0}(\gamma\xi,x,\eta)
    \,d\lambda^{s(\gamma)}(\xi)\,d\lambda^{r(x)}(\gamma)
    \,d\sigma^{s(x)}(\eta)\,d\mu(x) \\ 
    &=\iint f(\xi\inv, \xi\inv\gamma\inv \cdot x\cdot \eta)
    \frac{\frac{d\nu}{d\nu_0}(\gamma\xi,x,\eta)}
    {\frac{d\nu}{d\nu_0}(\gamma,x,\eta)} \,d\lambda^{s(\gamma)}(\xi)
    \,d\nu_0(\gamma ,x,\eta).
  \end{align*}
  So off a $\nu_0$-null set we have
  \begin{equation}
    \label{eq:9}
    0 = \int f(\xi\inv,\xi\inv\gamma\inv \cdot x\cdot \eta)
    \frac{\frac{d\nu}{d\nu_0}(\gamma\xi,x,\eta)}
    {\frac{d\nu}{d\nu_0}(\gamma,x,\eta)} 
    \,d\lambda^{s(\gamma)}(\xi).
  \end{equation}
  However, because $d\nu/d\nu_0$ is strictly positive, the supports of
  \begin{equation*}
    \xi\mapsto f(\xi\inv,\xi\inv\gamma\inv \cdot x\cdot\eta)
    \frac{\frac{d\nu}{d\nu_0}(\gamma\xi,x,\eta)}
    {\frac{d\nu}{d\nu_0}(\gamma,x,\eta)} 
    \quad \text{and} \quad
    \xi\mapsto f(\xi\inv,\xi\inv\gamma\inv \cdot x \cdot\eta)
  \end{equation*}
  are the same. Thus, since $f\geq 0$, \eqref{eq:9} holds if and only
  if
  \begin{equation*}
    0 = \int f(\xi\inv,\xi\inv \gamma\inv \cdot x\cdot  \eta)
    d\lambda^{s(\gamma)}(\xi).
  \end{equation*}
  As a result
  \begin{align*}
    0&= \iint f(\xi,\xi\inv\gamma\inv \cdot x\cdot\eta)
    \,d\lambda^{s(\gamma)}(\xi)
    \,d\nu_0(\gamma,x,\eta) \\
    &= \iint f(\xi\inv,\xi\inv \cdot x)\,d\lambda^{r(x)}(\xi)\,d[\mu](x).
  \end{align*}
  It follows that $[\mu]$ is quasi-invariant with respect to $\Gu
  $. The corresponding assertion for $\Hu$ follows by symmetry.

  The proof of \eqref{quasi push} is similar but
  easier.
\end{proof}

Now that we have a covariant representation $(\pi,U_{\Gu},X*\HH,\mu)$
of $(\A, \Gu , \alpha)$ we may form the integrated representation $R =
\pi\rtimes U_G$ of $B$ on $L^2(X*\HH,\mu)$.  This will make up the
$C^*$-algebraic portion of a covariant representation of $(\B,
H,\beta)$.
\subsection{A \boldmath Representation of $H$}
\label{sec:build-repr-h}

  Next we must build a unitary representation of $H$.  We
use the fact that $\mu$ is quasi-invariant with respect to $\Hu$
(Proposition~\ref{prop:5}) to restrict $U$ to a unitary representation
$(U_{\Hu},X*\HH,\mu)$ of the groupoid $\Hu$.  This yields a
representation of the transformation groupoid $C^*$-algebra
$C^*(\Hu)$.  However, it is routine to check that $C^*(\Hu)$ is
naturally isomorphic to the groupoid crossed product
$C_0(X)\rtimes_{\rt} H$ (for example, see \cite[Remark
2.7]{goe:imj09}).  Next, we recall from the proof of the
Disintegration Theorem for crossed products \cite[Theorem
7.12]{muhwil:nyjm08}, that there is a nondegenerate map from $C^*(H)$ into the
multiplier algebra $M(C_0(X)\rtimes H)$ defined for $\phi \in C_c(H)$
and $f\in \Gamma_c(H,r^*C_0(X))$ by 
\[
\phi \cdot f(\eta) = \int \phi(\zeta) \rt_\zeta(f(\zeta\inv\eta))
\,d\sigma^{r(\eta)}(\zeta).
\]
Since the latter is
isomorphic to $M(C^*(\Hu))$, we obtain a map $m$ of $C^{*}(H)$ into
$M(C^{*}(\Hu))$ which is given on $\phi\in C_c(H)$ and $f\in C_c(\Hu)$ by
\begin{equation}
  \label{eq:11}
  m(\phi)f(x,\eta) = \int \phi(\zeta)f(x\cdot \zeta,\zeta\inv\eta)
  d\sigma^{r(\eta)}(\zeta).
\end{equation}
This multiplier action is important because we can use it to form a
representation $W$ of $H$ on $L^2(X*\HH,\mu)$.

\begin{prop}
  \label{prop:6}
  Let $m$ be the action given in \eqref{eq:11} and let
  $\overline{U}_{\Hu}$ be the extension of (the integrated form) of
  $U_{\Hu}$ to the multipliers of $C^{*}(\Hu)$.  Then $W =
  \overline{U}_{\Hu}\circ m$ is a representation of $C^*(H)$ which,
  for $\phi\in C_c(H)$ and $h\in L^2(X*\HH,\mu)$, is given by 
  \begin{equation}
    \label{eq:12}
    W(\phi)h(x) = \int \phi(\eta)
    U_{(r(x),x,\eta)}h(x\cdot \eta)\Delta^{\Hu}_{\mu}\neghalf(x,\eta)
    d\sigma^{s(x)}(\eta). 
  \end{equation}
\end{prop}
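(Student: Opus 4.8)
The plan is to identify $W = \overline{U}_{\Hu}\circ m$ as a composition of known maps and then verify the explicit formula \eqref{eq:12} by a direct computation that unwinds the multiplier action \eqref{eq:11} against the integrated form of $U_{\Hu}$. Since $m:C^*(H)\to M(C^*(\Hu))$ is a nondegenerate $*$-homomorphism and $\overline{U}_{\Hu}$ is the canonical extension of the (nondegenerate) integrated representation $U_{\Hu}\rtimes$ to the multiplier algebra, the composition $W$ is automatically a representation of $C^*(H)$; the only real content is the formula. First I would recall that the integrated form of $U_{\Hu}$ acting on $L^2(X*\HH,\mu)$ is given for $f\in C_c(\Hu)$ by
\begin{equation*}
  U_{\Hu}(f)h(x) = \int f(x,\eta)\, U_{(r(x),x,\eta)}\,
  h(x\cdot\eta)\,\Delta^{\Hu}_{\mu}\neghalf(x,\eta)\,
  d\sigma^{s(x)}(\eta),
\end{equation*}
which is the specialization of the integrated-form formula from \cite[Proposition~7.11]{muhwil:nyjm08} to the groupoid $\Hu=X\rtimes H$ with its Haar system $\sset{\sigma^{s(x)}}$, using that the $C^*$-part of the representation is trivial (we are integrating a unitary representation of $\Hu$, i.e.\ a representation of $C^*(\Hu)$).

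To extract the formula for $W$, I would use nondegeneracy: choose an approximate identity, or more cleanly, note that $\overline{U}_{\Hu}(m(\phi))$ is characterized by $\overline{U}_{\Hu}(m(\phi))\,U_{\Hu}(f) = U_{\Hu}(m(\phi)f)$ for all $f\in C_c(\Hu)$. Thus I would compute $U_{\Hu}(m(\phi)f)h(x)$ by substituting the definition \eqref{eq:11} of $m(\phi)f$ into the integrated-form formula above, obtaining a double integral over $\zeta\in H$ and $\eta$. The key manipulation is a change of variables in the inner integral (replacing $\eta$ by $\zeta\eta'$, equivalently translating by $\zeta$) combined with the \emph{multiplicativity} of the modular function $\Delta^{\Hu}_{\mu}$ and the cocycle/homomorphism property $U_{(r(x),x,\zeta\eta')} = U_{(r(x),x,\zeta)}U_{(s(x\cdot\zeta)\cdots)}$ coming from $U$ being a homomorphism into $\Iso(X*\HH)$. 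After this substitution the integral should factor as $\int\phi(\zeta)\,U_{(r(x),x,\zeta)}\Delta^{\Hu}_{\mu}\neghalf(x,\zeta)\bigl(U_{\Hu}(f)h\bigr)(x\cdot\zeta)\,d\sigma^{s(x)}(\zeta)$, which by comparison with the defining relation $\overline{U}_{\Hu}(m(\phi))\,U_{\Hu}(f)=U_{\Hu}(m(\phi)f)$ and the density of $\sset{U_{\Hu}(f)h}$ yields precisely \eqref{eq:12}.

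The main obstacle I anticipate is the careful bookkeeping of the modular functions and the measure-theoretic change of variables: one must verify that the Radon--Nikodym factor $\Delta^{\Hu}_{\mu}$ splits correctly under the translation $\eta\mapsto\zeta\eta'$ so that the two half-powers $\Delta^{\Hu}_{\mu}\neghalf(x,\zeta\eta')$ and the Jacobian of the change of variables recombine into $\Delta^{\Hu}_{\mu}\neghalf(x,\zeta)$ times $\Delta^{\Hu}_{\mu}\neghalf(x\cdot\zeta,\eta')$. This is exactly where quasi-invariance of $\mu$ with respect to $\Hu$ (Proposition~\ref{prop:5}) and the multiplicativity of $\Delta$ (recorded in \cite{muhwil:nyjm08}*{Remark~7.1}) are used. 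A secondary subtlety is that the formula \eqref{eq:12} need only hold $\mu$-almost everywhere and that \eqref{eq:11} defines $m(\phi)f$ only as an element of $C_c(\Hu)$ modulo the usual null-set caveats; I would handle this by working on a $\mu$-conull set where the covariance and homomorphism properties of $U$ hold, as permitted by \cite{muhwil:nyjm08}*{Remark~7.10}. Everything else is routine verification that $W$ respects adjoints and products, which follows formally since both $m$ and $\overline{U}_{\Hu}$ are $*$-homomorphisms.
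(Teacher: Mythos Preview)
Your proposal is correct and follows essentially the same route as the paper: establish that $W$ is a representation formally from the composition, then verify \eqref{eq:12} on the dense set of vectors $U_{\Hu}(f)h$ by expanding $U_{\Hu}(m(\phi)f)h(x)$, performing the change of variables $\eta\mapsto\zeta\eta'$, and invoking multiplicativity of $\Delta^{\Hu}_{\mu}$ together with the homomorphism property of $U$. One small over-caution: $m(\phi)f$ is an honest element of $C_c(\Hu)$, not merely defined almost everywhere, so the null-set caveat you raise for \eqref{eq:11} is unnecessary.
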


\begin{proof}
  Since $\overline{U}_{\Hu}$ is a representation of $M(C^*(\Hu))$ by
  \cite{ren:jot87}*{Proposition~3.5(ii)} and since
  $m$ is a nondegenerate $*$-homomorphisms, $W$ is a
  representation.  That $W$ has the form given in
  \eqref{eq:12} follows from the following computation.  By
  nondegeneracy it suffices to verify \eqref{eq:12} for vectors of the
  form $U_{\Hu}(f)h$ for $f\in C_c(\Hu)$ and $h\in L^2(X*\HH,\mu)$.
  Let $\phi\in C_c(H)$. Then
  \begin{align*}
    W(\phi)&U_{\Hu}(f)h(x) = U_{\Hu}(m(\phi)f)h(x) \\
    &=\int m(\phi)f(x,\zeta)U_{(r(x),x,\zeta)}h(x\cdot
    \zeta)\Delta^{\Hu}_\mu\neghalf(x,\zeta)
    \,d\sigma^{s(x)}(\zeta) \\
    &=\iint \phi(\eta)f(x\cdot \eta,\eta\inv\zeta)
    U_{(r(x),x,\zeta)}h(x\cdot \zeta)\Delta^{\Hu}_\mu\neghalf(x,\zeta)
    \,d\sigma^{s(x)}(\eta)\,d\sigma^{s(x)}(\zeta)
    \\
    &=\iint \phi(\eta)f(x\cdot
    \eta,\zeta)U_{(r(x),x,\eta\zeta)}h(x\cdot
    \eta\zeta)\Delta^{\Hu}_\mu\neghalf(x,\eta\zeta) \,d\sigma^{s(\eta)}(\zeta)
    \,d\sigma^{s(x)}(\eta)
    \\
    &=\int \phi(\eta)U_{(r(x),x,\eta)} \int f(x\cdot
    \eta,\zeta)U_{(r(x),x\cdot\eta,\zeta)} h(x\cdot \eta\zeta)
    \Delta^{\Hu}_\mu\neghalf(x\cdot\eta,\zeta) \,d\sigma^{s(\eta)}(\zeta) \\
    &\hspace{3in} \Delta^{\Hu}_\mu\neghalf(x,\eta) \,d\sigma^{s(x)}(\eta)\\
    &=\int \phi(\eta)U_{(r(x),x,\eta)}U_{\Hu}(f)h(x\cdot
    \eta)\Delta^{\Hu}_\mu\neghalf(x,\eta)\,d\sigma^{s(x)}(\eta).  \qedhere
  \end{align*}
\end{proof}

In order to proceed any further we need to examine the measure $\mu$
and the relationship between the various modular functions more
closely.  But first we need to recall some issues concerning
disintegration of measures from \cite{wil:crossed}*{Appendices F~and
  I}.  Let $\mu$ be a measure on $X$ and $\tau = s_*\mu$ be the push
forward of $\mu$ to $H\unit$.  We now use the Disintegration Theorem
for measures \cite[Theorem I.5]{wil:crossed} to generate Radon
measures $\{\mu_u\}_{u\in H\unit}$ on $X$ such that:
\begin{enumerate}
\item Off a $\tau$-null set $N$, each $\mu_u$ is a probability measure
  supported on $s\inv(u)\subset X$ and $\mu_u = 0$ for $u\in N$.
\item For all bounded Borel functions $h$, the map
  \begin{equation*}
    u \mapsto \int h(x)d\mu_u(x)
  \end{equation*}
  is bounded and Borel.
\item For all bounded Borel functions $h$,
  \begin{equation*}
    \label{eq:13}
    \int_X h(x) d\mu(x) = \iint h(x) d\mu_u(x) d\tau(u).
  \end{equation*}
\end{enumerate}

Let $\sset{e_i}$ be the special orthogonal fundamental sequence for a
Borel Hilbert bundle $X*\HH$ over $X$. By \cite[Example
F.19]{wil:crossed} there exists a Borel Hilbert bundle $H^{(0)}*\KK$
with fibres $\K(u) = L^2(s\inv(u)*\HH,\mu_u)$ for all $u$ and so that
\begin{equation*}
  \tilde{e}_i(u)(x) = e_i(x)
\end{equation*}
defines a special orthogonal fundamental sequence for $H\unit*\KK$.
Furthermore, the map $V:L^2(X*\HH,\mu)\to L^2(H\unit*\KK,\tau)$ given
by $V(h)(u)(x) = h(x)$ is a natural isomorphism between the two
spaces.

By Proposition \ref{prop:5}, $\tau$ is quasi-invariant with respect to
$H$.  Thus we may form the modular function on $H$,
$\Delta^{H}_{\tau}(\eta)$, as well as the usual one,
$\Delta^{\Hu}_{\mu}(x,\eta)$, on $\Hu$.  Our next proposition connects the
decomposition of $\mu$ with respect to $\tau$ with the action of $H$
on $X$.  (As a special case, it simply says that $H$ and $\tau$ give
us a measured groupoid as in \cite{muh:cbms}*{Chap.~4}.)

\begin{prop}
  \label{prop:7}
  Let $\{\mu_u\}_{u\in H\unit}$ be the decomposition of $\mu$ as
  above.  There is a $\tau$-conull set $V\subset\ho$ such that for all
  $\eta\in H\restr V$ and all bounded Borel functions $\phi$ we have
  \begin{equation}
    \label{eq:14}
    \int \phi(x) d\mu_{r(\eta)}(x) = \int \phi(x\cdot\eta) \theta(x,\eta)
    d\mu_{s(\eta)}(x)
  \end{equation}
  where $\theta(x,\eta) = \Delta^{\Hu}_\mu(x,\eta)/\Delta^H_\tau(\eta)$.
\end{prop}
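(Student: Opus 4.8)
The plan is to read \eqref{eq:14} as the assertion that the disintegration $\set{\mu_u}$ is equivariant under the $H$-action, up to the modular factor $\theta$, and to prove it in \emph{weak form} by pairing both sides with a test function on $H$ and transporting everything to the measure induced by $\mu$ on $\Hu=X\rtimes H$. Write $\nu_{\Hu}=\mu\circ\sigma_{\Hu}$ for the measure on $\Hu$ coming from $\mu$ and the Haar system $\sigma_{\Hu}^{x}=\delta_{x}\times\sigma^{s(x)}$, and $\nu_{H}=\tau\circ\sigma$ for the one on $H$. The basic tool is the interchange formula
\[
  \int_{\Hu}F(x,\eta)\,d\nu_{\Hu}(x,\eta)=\int_{H}\Bigl(\int_{X}F(x,\eta)\,d\mu_{r(\eta)}(x)\Bigr)\,d\nu_{H}(\eta),\qquad(\star)
\]
valid for $F\in C_{c}(\Hu)$. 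One obtains $(\star)$ by expanding $\nu_{\Hu}=\int_{X}\delta_{x}\times\sigma^{s(x)}\,d\mu(x)$, inserting the disintegration $\mu=\int\mu_{u}\,d\tau(u)$, and using that $\mu_{u}$ is supported on $s\inv(u)$, so that $\sigma^{s(x)}=\sigma^{u}=\sigma^{r(\eta)}$ on the support of $\mu_{u}$; interchanging the order of integration then leaves the fibre integral against $\mu_{r(\eta)}$.

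First I would reduce \eqref{eq:14} to the statement that, for every $\psi\in C_{c}(H)$,
\begin{multline*}
  \int_{H}\psi(\eta)\Bigl(\int\phi(x)\,d\mu_{r(\eta)}(x)\Bigr)\,d\nu_{H}(\eta)\\
  =\int_{H}\psi(\eta)\Bigl(\int\phi(x\cdot\eta)\,\theta(x,\eta)\,d\mu_{s(\eta)}(x)\Bigr)\,d\nu_{H}(\eta).
\end{multline*}
Both inner expressions are Borel in $\eta$ (by the measurability properties of the disintegration and of the modular functions), so equality of the pairings against all $\psi$ forces the integrands to agree for $\nu_{H}$-almost every $\eta$. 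Running this for each $\phi$ in a countable dense subset of $C_{c}(X)$ and intersecting the resulting conull sets gives a $\nu_{H}$-conull set on which the two sides of \eqref{eq:14} agree for all $\phi\in C_{c}(X)$, hence --- as an equality of Radon measures --- for all bounded Borel $\phi$. Finally, a standard inessential-reduction argument using the quasi-invariance of $\tau$ (Proposition~\ref{prop:5}) replaces this set by one of the form $H\restr V$ for a $\tau$-conull $V\subset\ho$, as required.

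The heart of the argument is the evaluation of the two sides of this weak form. By $(\star)$ applied to $F(x,\eta)=\phi(x)\psi(\eta)$, the left-hand side equals $\int_{\Hu}\phi(x)\psi(\eta)\,d\nu_{\Hu}(x,\eta)$. For the right-hand side I would start from the \emph{same} integral and apply quasi-invariance of $\mu$ on $\Hu$: since $\Delta^{\Hu}_{\mu}=d\nu_{\Hu}/d\nu_{\Hu}\inv$ may be taken multiplicative and $(x,\eta)\inv=(x\cdot\eta,\eta\inv)$ in $\Hu$, this rewrites $\int_{\Hu}\phi(x)\psi(\eta)\,d\nu_{\Hu}$ as the integral of $\phi(x\cdot\eta)\,\psi(\eta\inv)\,\Delta^{\Hu}_{\mu}(x,\eta)\inv$ against $\nu_{\Hu}$. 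Applying $(\star)$ once more and then quasi-invariance of $\tau$ on $H$ (to undo the inversion $\eta\mapsto\eta\inv$ at the cost of the factor $\Delta^{H}_{\tau}(\eta)\inv$) converts the fibre integral from $\mu_{r(\eta)}$ to $\mu_{s(\eta)}$ and, using multiplicativity of $\Delta^{\Hu}_{\mu}$, collects the two modular functions into exactly $\theta(x,\eta)=\Delta^{\Hu}_{\mu}(x,\eta)/\Delta^{H}_{\tau}(\eta)$, returning the right-hand side.

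The main obstacle is precisely this last bookkeeping. One must execute the two successive appeals to quasi-invariance --- one on $\Hu$ and one on $H$ --- while tracking the inversion $(x,\eta)\mapsto(x\cdot\eta,\eta\inv)$, the induced change of the fibre measure from $\mu_{r(\eta)}$ to $\mu_{s(\eta)}$, and the relabeling of the dummy variable, and then verify via multiplicativity of $\Delta^{\Hu}_{\mu}$ that the accumulated Radon--Nikodym factors collapse to the single function $\theta$ in the exact form stated (in particular, matching the placement of the action and of the argument of $\theta$ to the conventions in play). The remaining measure-theoretic steps --- existence and measurability of the disintegration $\set{\mu_{u}}$, and the passage from ``$\nu_{H}$-a.e.\ $\eta$, each $\phi$'' to ``all $\phi$, all $\eta\in H\restr V$'' for a $\tau$-conull $V$ --- are routine given separability and the quasi-invariance already established in Proposition~\ref{prop:5}.
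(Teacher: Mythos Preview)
Your approach is correct and genuinely different from the paper's. The paper argues through the Hilbert-space representation $W$ of $C^{*}(H)$ built in Proposition~\ref{prop:6}: it expands $\bip(W(\psi)h|k)$ in two ways (once directly, once via $W(\psi)=W(\psi^{*})^{*}$) and then specializes $h$ to a $U_{\Hu}$-invariant unit section $e$ of $X*\HH$ --- constructed in an auxiliary lemma from a Borel cross-section for the quotient $X\to X/H$ --- and $k=\varrho_{i}\,e$; with this choice the inner products collapse to scalars and one reads off the weak form of \eqref{eq:14} against a countable dense family $\{\varrho_{i}\}\subset C_{c}(X)$. Your route bypasses all of this: the interchange formula $(\star)$ together with the two instances of quasi-invariance already recorded in Proposition~\ref{prop:5} yields the weak identity directly, with no Borel Hilbert bundle, no representation $W$, and no cross-section lemma. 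This is more elementary and cleanly isolates the measure-theoretic content of the proposition; the paper's path, while heavier, simply reuses machinery it has set up anyway for the covariant representation of $(\B,H,\beta)$. Two small points worth making explicit in your write-up: first, $(\star)$ must be applied to Borel (not merely continuous) integrands once the modular function $\Delta^{\Hu}_{\mu}$ enters, which is routine by monotone-class arguments; second, the promotion of the $\nu_{H}$-conull set to one of the form $H\restr V$ is Ramsay's lemma, and to invoke it you need the set of $\eta$ satisfying \eqref{eq:14} to be closed under composition --- this follows from the multiplicativity of $\Delta^{\Hu}_{\mu}$ and $\Delta^{H}_{\tau}$ (hence of $\theta$), exactly as the paper checks.
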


Proving Proposition~\ref{prop:7} will take some work.  We start by
using the groupoid structure to generate a very special ``invariant''
section of $X*\HH$.
\begin{lemma}
  \label{lem:2}
  There exists $e\in \B(X*\HH)$ such that $e(x)$ is a unit vector if $\H(x)\ne 0$ and for all
  $(x,\eta)\in\underline{H}$
  \begin{equation*}
    e(x\cdot \eta) = U^*_{(r(x),x,\eta)}e(x).
  \end{equation*}
\end{lemma}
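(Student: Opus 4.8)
The plan is to build $e$ by choosing a single unit vector in each $H$-orbit and transporting it across the orbit using the (inverses of the) unitaries $U$. Since the right $H$-action on $X$ is free, once we fix a base point $c$ in the orbit of $x$ and write $x = c\cdot\zeta$ for the unique $\zeta\in H$, we are \emph{forced} to set $e(x) = U^*_{(r(c),c,\zeta)}e(c)$; the content of the lemma is that this prescription is well defined, Borel, and satisfies the stated cocycle-type relation for all $(x,\eta)$.

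Concretely, I would first use that $X$ is a $(G,H)$-equivalence, so the right $H$-action is free and proper and the orbit map $q\colon X\to X/H$ is identified with $r_X\colon X\to\go$, with $\go$ second countable locally compact Hausdorff. A measurable selection argument then produces a Borel section $c\colon X/H\to X$, and freeness together with properness (which makes the division map continuous where defined) yields a Borel map $x\mapsto\eta_x\in H$ with $c(q(x))\cdot\eta_x = x$. Next, from the special orthogonal fundamental sequence $\sset{e_i}$ for $X*\HH$ I would manufacture a Borel unit section $\tilde e$ by setting $\tilde e(y)=e_{n(y)}(y)$, where $n(y)=\min\set{i:e_i(y)\neq0}$, and $\tilde e(y)=0$ if $\H(y)=0$. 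Then I define
\[
 e(x) := U^*_{(r(c(q(x))),\,c(q(x)),\,\eta_x)}\,\tilde e\bigl(c(q(x))\bigr),
\]
with $e(x)=0$ where $\H(x)=0$. As a composite of Borel maps with the Borel field $U$, this $e$ lies in $\B(X*\HH)$, and since each $U$ is unitary it is a unit vector exactly where $\H(x)\neq0$.

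For the invariance relation, fix $(x,\eta)\in\underline{H}$, put $c=c(q(x))$ and $\zeta=\eta_x$, so that $x=c\cdot\zeta$ and $x\cdot\eta=c\cdot(\zeta\eta)$, whence $\eta_{x\cdot\eta}=\zeta\eta$ by freeness. Because $r_X$ is constant on $H$-orbits, a direct computation of the product in $\Eu$ gives $(r(c),c,\zeta)(r(x),x,\eta)=(r(c),c,\zeta\eta)$, so the homomorphism property of $U$ gives $U_{(r(c),c,\zeta)}U_{(r(x),x,\eta)}=U_{(r(c),c,\zeta\eta)}$. Taking adjoints and applying the result to $\tilde e(c)$ yields $U^*_{(r(x),x,\eta)}e(x)=U^*_{(r(c),c,\zeta\eta)}\tilde e(c)=e(x\cdot\eta)$, which is exactly the asserted identity.

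The main obstacle is the \emph{everywhere} (rather than almost-everywhere) validity of this computation. The map $U$ is a priori only a Borel homomorphism defined on a conull reduction, so the identity $U_{(r(c),c,\zeta)}U_{(r(x),x,\eta)}=U_{(r(c),c,\zeta\eta)}$ need not hold literally for all composable triples. I would handle this by passing to a conull, $H$-invariant Borel subset of $X$ and invoking Ramsay's theorem to replace $U_{\underline{H}}$ by a genuine (strict) Borel homomorphism agreeing with it $\nu$-almost everywhere; on this inessential reduction the product computation holds for \emph{all} composable pairs and the construction goes through verbatim. The remaining delicate points---the Borel measurability of the base-point section $c$ and of the division map $x\mapsto\eta_x$---are precisely where freeness and properness of the $H$-action, guaranteed by the $(G,H)$-equivalence, are used.
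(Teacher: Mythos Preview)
Your approach is essentially the same as the paper's: choose a Borel cross section $c$ for $q:X\to X/H$, use freeness to get a Borel division map $x\mapsto\eta_x$ with $c(q(x))\cdot\eta_x=x$, and transport a unit section along orbits via $U^*$. Two minor simplifications are available. First, your auxiliary section $\tilde e(y)=e_{n(y)}(y)$ is unnecessary: by the definition of a \emph{special} orthogonal fundamental sequence (see \cite{wil:crossed}*{Remark~F.7}), $e_1(x)$ is already a unit vector whenever $\H(x)\ne0$, so the paper simply uses $e_1$ in place of your $\tilde e$. Second, your final paragraph about almost-everywhere versus everywhere validity and the appeal to Ramsay are not needed here: in the definition of a covariant representation \cite{muhwil:nyjm08}*{Definition~7.9}, $U$ is required to be a genuine Borel \emph{homomorphism} into $\Iso(X*\HH)$, so the identity $U_{(r(c),c,\zeta)}U_{(r(x),x,\eta)}=U_{(r(c),c,\zeta\eta)}$ holds for all composable pairs, not merely almost everywhere. (It is only the covariance relation \eqref{eq:covariant} between $U$ and $\pi$ that holds a.e.) With these two observations your argument becomes identical to the paper's.
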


\begin{proof}
  Since second countable, locally compact Hausdorff spaces are  Polish
  --- see \cite{wil:crossed}*{Lemma~6.5} --- and 
  since the orbit space $X/H$ is Hausdorff, 
  it follows from the Corollary
  following \cite{arv:invitation}*{3.4.1} that there is a Borel cross
  section $c$ for the quotient map $q:X\to X/H$. Using $c$ we may
  construct 
  a Borel map $\varsigma:X\to H$ with the property that
  \begin{equation*}
    c(q(x))\varsigma(x) = x\quad\text{for all $x\in X$.}
  \end{equation*}

  Now let $e_i$ be a special orthogonal fundamental sequence for
  $X*\HH$ as in \cite[Remark F.7]{wil:crossed}.  By definition,
  $e_1(x)$ is a unit vector whenever $\H(x)$ is non-trivial.  We
  define
  \begin{equation*} 
    e(x) = U_{(r(x),c(q(x)),\varsigma(x))}^* e_1(c(q(x))).  
  \end{equation*}
  Since $U$ is a unitary representation, $e(x)$ is a unit vector if
  $\H(x)\ne 0$.  All that remains is to show that 
  $e$ is Borel and invariant.  According to the definition of a
  fundamental sequence, to show that $e$ is Borel it suffices to show
  that $x \mapsto \bip(e(x)|{e_i(x)})$ is Borel for all $i$
  (\cite{wil:crossed}*{Proposition~F.6}).  For this note that
  \begin{equation*}
    (\xi,x,\eta)\mapsto \bip(U^{*}_{(\xi,x,\eta)}e_{1}(x) |
    {e_{i}(\xi^{-1}\cdot x \cdot \eta)})
  \end{equation*}
  is Borel on $\Eu$ (since $U$ is a representation).  Therefore, since $x\mapsto
  (r(x),c(q(x)),\varsigma(x))$ is Borel, so is
  \begin{equation*}
    x\mapsto \bip(e(x)|{e_{i}(x)}) =
    \bip(U^{*}_{(r(x),c(q(x)),\varsigma(x)))} e_{1}(c(q(x)))) | {e_{i}(x)}).
  \end{equation*}
  The ``invariance'' portion of the lemma now follows from a brief
  computation and the observation that
  $\varsigma(x\cdot\eta)=\varsigma(x)\eta$.
\end{proof}

This special invariant vector, combined with the representation $W$,
is the key to showing that the measure decomposition respects the
groupoid action.

\begin{proof}[Proof of Proposition \ref{prop:7}]
  Let $h,k\in\L^2(X*\HH,\mu)$ and $\psi\in C_c(H)$.  Using
  \eqref{eq:12},
we have
   that
  \begin{align*}
    (W(\psi)&h|k) = \int \bip(W(\psi)h(x)|{k(x)})\,d\mu(x) \\
    &=\iiint \psi(\eta)\bip(U_{(r(x),x,\eta)}h(x\cdot\eta)|{k(x)})
    \Delta^{\Hu}_{\mu}\neghalf(x,\eta)d\sigma^{s(x)}(\eta)\,d\mu_u(x)\,d\tau(u)
    \\
    &= \iint \psi(\eta) \int \bip(U_{(r(x),x,\eta)}h(x\cdot
    \eta)|{k(x)})\Delta^{\Hu}_\mu\neghalf(x,\eta) d\mu_{r(\eta)}(x) d\sigma^u(\eta) \,d\tau(u).
  \end{align*}
  Given $\eta\in H$ define the Radon measure $\eta\cdot \mu_{s(\eta)}$
  supported in $r^{-1}_{X}(r(\eta))$ by
  \begin{equation*}
    \int \varrho(x\cdot \eta)d(\eta\cdot \mu_{s(\eta)})(x) := \int
    \varrho(x)d\mu_{s(\eta)}(x)\quad \text{for all $\varrho\in C_c(X)$.}
  \end{equation*}
  Using the fact that $W$ is a representation, we may also write
  \begin{align*}
    \bip(&W(\psi)h|k) = \overline{\bip(W(\psi^*)k|h)}  \\
    &=\iiint
    \overline{\psi^*(\eta)\bip(U_{(r(x),x,\eta)}k(x\cdot\eta)|{h(x)})}
    \Delta^{\Hu}_\mu\neghalf(x,\eta)
    \,d\sigma^{s(x)}(\eta) \,d\mu_u(x)\,d\tau(u) \\
    &=\iiint \psi(\eta\inv)\bip(h(x)|{U_{(r(x),x,\eta)}k(x\cdot
      \eta)})\Delta^{\Hu}_\mu\neghalf(x,\eta)
    d\mu_{r(\eta)}(x) \,d\sigma^u(\eta) \,d\tau(u) \\
    &=\iiint \psi(\eta)\bip(h(x)|{U_{(r(x),x,\eta\inv)}k(x\cdot
      \eta\inv)})\Delta^{\Hu}_\mu\neghalf(x,\eta\inv)
    \Delta^H_\tau(\eta)\inv\,d\mu_{s(\eta)}(x) \\
    &\hspace{3in} d\sigma^u(\eta) \,d\tau(u)  \\
    &=\iint \psi(\eta) \int \bip(U_{(r(x),x,\eta)}h(x\cdot  
  \eta)|{k(x)})\Delta^{\Hu}_\mu\neghalf(x,\eta)\theta(x,\eta) \,d(\eta\cdot
    \mu_{s(\eta)})(x)\\
    &\hspace{3in} d\sigma^u(\eta) \,d\tau(u),
  \end{align*}
  where we used the groupoid homomorphism properties of $U$ and
  $\Delta^{\Hu}_\mu$ to get the last equality.  Since these two forms of
  $\bip(W(\psi)h|k)$ are equal for all $\psi\in C_c(H)$, we may
  conclude that given  $h$ and $k$ there is a
  $\tau\circ\lh$-null set $N_{h,k}$ such that $\eta\not\in N_{h,k}$
  implies
  \begin{align}
    \label{eq:15}
    \int &\bip(U_{(r(x),x,\eta)}h(x\cdot \eta)|{k(x)})\Delta^{\Hu}_\mu\neghalf
    (x,\eta)
    \,d\mu_{r(\eta)}(x) \\
    \nonumber &=\int \bip(U_{(r(x),x,\eta)}h(x\cdot
    \eta)|{k(x)})\Delta^{\Hu}_\mu\neghalf (x,\eta)
    \theta(x,\eta)\,d(\eta\cdot\mu_{s(\eta)})(x).
  \end{align}
(To be precise, we are choosing $N_{h,k}$ so that both sides of
\eqref{eq:15} are well-defined, finite and equal to each other.)

  Let $e$ be the invariant vector from Lemma \ref{lem:2}.  Then, by
  construction, we have for all $\varrho\in C_c(X)$
  \begin{equation}
    \label{eq:16}
    \bip(U_{(r(x),x,\eta)}e(x\eta)|{\varrho e(x)}) =
    \bip(e(x)|{\varrho(x)e(x)}) = \varrho(x) 
  \end{equation}
  for all $x$ such that $\H(x)$ is nontrivial.  Let
  $\{\varrho_i\}$ be an inductive limit dense set in $C_c(X)$
  (see Remark~\ref{rem-separable-basic}(b)).    If we set $h = e$ and $k =
  \varrho_i e$, then we can conclude from \eqref{eq:15} and
  \eqref{eq:16} that for all $\eta\not\in N_{e,\varrho_i e}$
  \begin{equation}
    \label{eq:17}
    \int \varrho_i(x)\Delta^{\Hu}_\mu\neghalf(x,\eta) \,d\mu_{r(\eta)}(x) = 
    \int \varrho_i(x)\Delta^{\Hu}_\mu\neghalf(x,\eta)\theta(x,\eta) \,d(\eta\cdot
    \mu_{s(\eta)})(x). 
  \end{equation}
  We may take the countable union $N = \bigcup_i N_{e,\varrho_i e}$ to
  obtain a null set $N$ such that \eqref{eq:17} holds for all $i$
  provided $\eta\not\in N$.  We can  assume
  that if $K\subset X$ is compact, then there is an $i_{0}$ such that
  $\varrho_{i_{0}}\ge0$ and equal to $1$ on $K$. This
  implies that the measures $\Delta^{\Hu}_\mu(x,\eta)^{-\half}\,d\mu_{r(\eta)}$
  and $\Delta^{\Hu}_\mu(x,\eta)^{-\half}\theta(x,\eta)\,d(\eta\cdot
  \mu_{s(\eta)})$ are finite on compact subsets of $X$.  Thus they are
  Radon measures by \cite{rud:real}*{Theorem~2.18}.  In particular,
  they are determined on $C_{c}(X)$.  Since the $\varrho_i$ are dense in the
  inductive limit topology, it follows that the two measures are
  equal.  Thus
  \eqref{eq:17} holds for
  all nonnegative Borel functions.  Replacing an arbitrary nonnegative
  Borel
  function $\phi(x)$ by $\phi(x)\Delta\poshalf(x,\eta)$ we conclude
  that for $\eta\not\in N$ equation \eqref{eq:14} holds for all
  nonnegative Borel functions.  Since the $\mu_{u}$ are probability
  measures, \eqref{eq:14} holds for 
all bounded Borel 
  functions as claimed.

  It is clear that the set $\Sigma = \set{\eta\in H:
    \text{\eqref{eq:14} holds}}$ is conull.  Since the modular
  functions are all homomorphisms it is straightforward to show that
  if $\eta,\zeta\in\Sigma$ such that $s(\eta)=r(\zeta)$ then
  $\eta\zeta\in\Sigma$. It follows from a result of Ramsay's (see
  \cite{ram:am71}*{Lemma~5.2} or \cite{muh:cbms}*{Lemma~4.9}) that
  there is a $\tau$-conull set $V$ such that $H\restr V \subset
  \Sigma$.  This completes the proof.
\end{proof}

The proof of the following lemma is a brief computation and has been
omitted.

\begin{lemma}
  \label{lem:3}
  Let $\nu= \mu\circ \lambda_{\Eu}$.
   Then, $\nu$-almost everywhere, we have
  \begin{equation*}
    \Delta^{\Gu}_\mu(\gamma,x)\Delta^{\Hu}_\mu(\gamma\inv \cdot x, \eta) 
    = \Delta^{\Eu}_\mu(\gamma,x,\eta)
    =\Delta^{\Gu}_\mu(\gamma,x\cdot \eta)\Delta^{\Hu}_\mu(x,\eta).
  \end{equation*}
\end{lemma}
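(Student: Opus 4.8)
The plan is to exploit the product structure of the Haar system $\lambda_{\Eu}^{x}=\lambda^{r(x)}\times\delta_{x}\times\sigma^{s(x)}$ together with the two factorizations of a groupoid element
\[
(\gamma,x,\eta)=(\gamma,x)\cdot(\gamma\inv\cdot x,\eta)=(x,\eta)\cdot(\gamma,x\cdot\eta),
\]
the first factor lying in $\Gu$ and the second in $\Hu$ (and vice versa). Recall that $\Delta^{S}_{\mu}$ is the Radon--Nikodym derivative of $\nu=\mu\circ\kappa$ against its image under inversion, and may be taken multiplicative; equivalently, for all $F\in C_{c}(S)$,
\begin{equation}
\label{eq:mod-char}
\int_{S}F(s)\,d\nu(s)=\int_{S}F(s\inv)\,\Delta^{S}_{\mu}(s)\inv\,d\nu(s).
\end{equation}
Since inversion is a homeomorphism of $\Eu$, \eqref{eq:mod-char} determines $\Delta^{\Eu}_{\mu}$ up to a $\nu$-null set, so it suffices to produce \emph{some} function satisfying it and match.

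First I would establish two ``partial'' flip identities on $\Eu$. Writing $g(\gamma,x,\eta)=(\gamma\inv,\gamma\inv\cdot x,\eta)$ and $h(\gamma,x,\eta)=(\gamma,x\cdot\eta,\eta\inv)$, the claim is that for all $F\in C_{c}(\Eu)$
\begin{align}
\label{eq:gflip}
\int_{\Eu}F\,d\nu&=\int_{\Eu}F\bigl(g(\gamma,x,\eta)\bigr)\,\Delta^{\Gu}_{\mu}(\gamma,x)\inv\,d\nu(\gamma,x,\eta),\\
\label{eq:hflip}
\int_{\Eu}F\,d\nu&=\int_{\Eu}F\bigl(h(\gamma,x,\eta)\bigr)\,\Delta^{\Hu}_{\mu}(x,\eta)\inv\,d\nu(\gamma,x,\eta).
\end{align}
For \eqref{eq:gflip} I would integrate out the $H$-variable first, setting $\Phi(\gamma,x)=\int_{H}F(\gamma,x,\eta)\,d\sigma^{s(x)}(\eta)\in C_{c}(\Gu)$, apply the quasi-invariance of $\mu$ with respect to $\Gu$ from Proposition~\ref{prop:5} in the form \eqref{eq:mod-char} for $\Gu$, and then use $s(\gamma\inv\cdot x)=s(x)$ to recognize that $\sigma^{s(x)}$ is unchanged by $g$. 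Identity \eqref{eq:hflip} is symmetric: integrate out $G$ first, apply quasi-invariance with respect to $\Hu$, and use $r(x\cdot\eta)=r(x)$ so that $\lambda^{r(x)}$ is untouched by $h$. Because one variable is fully integrated out before the sub-flip is applied, the $x$-coupling between the $\lambda$- and $\sigma$-systems causes no trouble.

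Finally I would compose the two partial flips, using that $g\circ h=h\circ g=\iota$, the inversion on $\Eu$. Applying \eqref{eq:hflip} and then \eqref{eq:gflip}, and tracking how $\Delta^{\Hu}_{\mu}(x,\eta)\inv$ is transported by $g$ (which moves $x$ to $\gamma\inv\cdot x$), yields
\[
\int_{\Eu}F\,d\nu=\int_{\Eu}F(\gamma\inv,\gamma\inv\cdot x\cdot\eta,\eta\inv)\,\Delta^{\Gu}_{\mu}(\gamma,x)\inv\Delta^{\Hu}_{\mu}(\gamma\inv\cdot x,\eta)\inv\,d\nu(\gamma,x,\eta);
\]
comparison with \eqref{eq:mod-char} and uniqueness of the derivative give the first claimed equality $\Delta^{\Eu}_{\mu}(\gamma,x,\eta)=\Delta^{\Gu}_{\mu}(\gamma,x)\Delta^{\Hu}_{\mu}(\gamma\inv\cdot x,\eta)$. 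Applying the flips in the opposite order transports $\Delta^{\Gu}_{\mu}(\gamma,x)\inv$ by $h$ (which moves $x$ to $x\cdot\eta$) and produces the second equality $\Delta^{\Eu}_{\mu}(\gamma,x,\eta)=\Delta^{\Gu}_{\mu}(\gamma,x\cdot\eta)\Delta^{\Hu}_{\mu}(x,\eta)$.

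The main obstacle is bookkeeping rather than analysis: because each partial flip relocates the base point in $X$, the two modular factors must be evaluated at the shifted points $\gamma\inv\cdot x$ and $x\cdot\eta$, and it is precisely this transport under the \emph{other} flip that accounts for the two superficially different expressions in the statement and forces them to agree $\nu$-almost everywhere. One should also confirm that the multiplicativity of the modular functions is used consistently (so that $\Delta(s\inv)=\Delta(s)\inv$ throughout) and that the test functions $F\circ\iota$ still exhaust $C_{c}(\Eu)$, which they do since $\iota$ is a homeomorphism.
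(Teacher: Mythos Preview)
Your argument is correct. The paper omits the proof of this lemma entirely, remarking only that it is ``a brief computation,'' so there is nothing to compare against; your route via the two partial flips \eqref{eq:gflip} and \eqref{eq:hflip} and the observation that their composite in either order is the inversion on $\Eu$ is exactly the natural computation the authors have in mind, and your tracking of the shifted base points $\gamma^{-1}\cdot x$ and $x\cdot\eta$ is the right bookkeeping. One small technical point worth making explicit in a final write-up: after the first flip the integrand picks up a Borel (not continuous) factor, so the second application of \eqref{eq:gflip} or \eqref{eq:hflip} should be justified for nonnegative Borel integrands rather than just $C_{c}(\Eu)$; this is routine via monotone convergence once the identity is known on $C_{c}$.
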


\subsection{Back to the Proof of Proposition~\ref{prop-finish}}
\label{sec:back-proof-prop}

Now that we have dealt with the major measure theoretic issues, we can
turn to Proposition~\ref{prop-finish}.

\begin{proof}[Proof of Proposition~\ref{prop-finish}] Let
  $R=\pi\rtimes U_{\Gu}$ be the representation of $B$ given by the
  integrated form of $(\pi, U_{\Gu}, X*\HH,\mu)$.  We construct a groupoid
  representation of $H$ which is covariant with $R$.  From the
  discussion before the statement of Proposition~\ref{prop:7}, we can
  use \cite[Example~F.19]{wil:crossed} to obtain a Borel Hilbert
  bundle $H^{(0)}*\KK$ from $X*\HH$ with fibres $\K(u) =
  L^2(s\inv(u)*\HH,\mu_u)$ so that $V:L^2(X*\HH,\mu)\to
  L^2(H\unit*\KK,\tau)$ given by $V(h)(u)(x) = h(x)$ is a natural
  isomorphism between the two spaces. Note that if $\{e_i\}$ is a
  special orthogonal fundamental sequence for $X*\HH$, then
  $\tilde{e}_i(u)(x)=e_i(x)$ is a special orthogonal fundamental
  sequence for $H\unit*\KK$.

  Define a representation $Q$ of $B$ on $L^2(H\unit*\KK,\tau)$ by $Q =
  VRV^*$.  Simple computations show that $Q$ is a $C_0(H\unit)$-linear
  homomorphism and that for $f\in B_0(u)$ and $x\in s_X\inv(u)$
  \begin{equation*}
    (Q_u(f)h)(u)(x) = \int
    \pi_x(f(\gamma,x))U_{(\gamma,x,u)}h(u)(\gamma\inv\cdot
    x)\Delta^{\Gu}_\mu\neghalf(\gamma,x) \,d\lambda^{r(x)}(\gamma).
  \end{equation*}
  Next we define $W_\eta : \K(s(\eta))\to \K(r(\eta))$ by
  \begin{equation*}
    (W_\eta h)(r(\eta))(x) =
    U_{(r(x),x,\eta)}h(s(\eta))(x\cdot \eta)\theta\neghalf(x,\eta).
  \end{equation*}
  Straightforward computations using Proposition~\ref{prop:7} show
  that there is a $\tau$-conull set $V\subset\ho$ such that for all
  $\eta\in H\restr V$, $W_\eta$ is a unitary and that $W_\eta W_\zeta
  = W_{\eta\zeta}$ when $\eta$ and $\zeta$ are composable in $H\restr
  V$.  The
  last thing we need to check is that $W$ is Borel.  Given
  $\tilde{e}_j$ and $\tilde{e}_k$ we have
  \begin{equation*}
    \bip(W_\eta \tilde{e}_j(s(\eta))|{ \tilde{e}_k(r(\eta))}) = \int
    \bip(U_{(r(x),x,\eta)} e_j(x\cdot\eta)|{e_k(x)}) \theta\neghalf(x,\eta)
    \,d\mu_{r(\eta)}(x).
  \end{equation*}
  The integrand is Borel with respect to $\eta$ because the $e_i$ are
  a fundamental sequence and $U$ is a Borel representation.  Since
  $\mu_u$ is a Borel field of measures it follows from standard,
  albeit lengthy, arguments that
  \begin{equation*}
    \eta\mapsto \bip(W_\eta\tilde{e}_j(s(\eta))|{\tilde{e}_k(r(\eta))})
  \end{equation*}
  is Borel.  Thus we have all of the components to form a Borel
  representation $(W,H\unit*\KK,\tau)$ of $H$.

\begin{remark}
  Although we won't make use of this fact, the integrated form of
  $W_\eta$ is the push forward of the representation $W$ defined in
  Proposition~\ref{prop:6} from $L^2(X*\HH)$ to $L^2(H\unit*\KK)$ by
  $V$.
\end{remark}

We now show that $(Q, W, H\unit*\KK,\tau)$ satisfies the covariance
condition.  Using Lemma~\ref{lem:3},
\begin{align*}
  &(W_\eta Q_{s(\eta)}(f)h)(r(\eta))(x) =
  U_{(r(x),x,\eta)}Q_{s(\eta)}(f)h(x\cdot\eta)\theta\neghalf(x,\eta) \\
  &= \int U_{(r(x),x,\eta)}\pi_{x\cdot\eta}(f(\gamma,x\cdot \eta))
  U_{(\gamma,x\cdot\eta,s(\eta))}h(\gamma\inv \cdot x \cdot
  \eta)\\
  &\hspace{6 cm}\Delta^{\Gu}_\mu\neghalf(\gamma,x\cdot\eta)\theta\neghalf(x,\eta)
  \,d\lambda^{r(x)}(\gamma)  \\
  &= \int
  \pi_x(\omega_{(r(x),x,\eta)}(f(\gamma,x\cdot\eta)))U_{(\gamma,x,\eta)}
  h(\gamma\inv \cdot x\cdot \eta)\\
  &\hspace{6 cm}
  \Bigl(\frac{\Delta^{\Gu}_\mu(\gamma,x\cdot\eta)
    \Delta^{\Hu}_\mu(x,\eta)}{\Delta^H_\tau(\eta)}\Bigr)^{-\half} \,d\lambda^{r(x)}(\gamma) \\
  &= \int
  \pi_x(\beta_\eta(f)(\gamma,x))U_{(\gamma,x,\eta)}h(\gamma\inv \cdot
  x \cdot \eta) \Bigl(\frac{\Delta^{\Gu}_\mu(\gamma,x)\Delta^{\Hu}_\mu(\gamma\inv
    \cdot x,\eta)}{\Delta^H_\tau(\eta)}\Bigr)^{-\half} \,d\lambda^{r(x)}(\gamma) \\
  &= \int \pi_x(\beta_\eta(f)(\gamma,x))U_{(\gamma,x,s(x))}(W_\eta
  h)(r(\eta))(\gamma\inv \cdot x)
  \Delta^{\Gu}_\mu\neghalf(\gamma,x) \,d\lambda^{r(x)}(\gamma) \\
  &= Q_{r(\eta)}(\beta_\eta(f))W_\eta h(r(\eta))(x).
\end{align*}

This shows that $(Q,W,H\unit*\KK,\tau)$ is a covariant
representation of $(\B,H,\beta)$.  It remains to show that $V$
intertwines $\pi\rtimes U$ and $(Q\rtimes W)\circ \Upsilon$.  Given
$h,k\in L^2(X*\HH)$ and $f\in \Gamma_c(E,r^*\A)$,
\begin{align*}
  &\bip(V^*Q\rtimes W(\Upsilon(f))Vh|k) = \bip(Q\rtimes W(\Upsilon(f))Vh|Vk) \\
  &= \iiint \bip(Q(\Upsilon(f)(\eta))W_\eta
  Vh(s(\eta))(x)|{Vk(r(\eta))(x)})
  \Delta^H_\tau\neghalf(\eta) \,d\mu_u(x)\,d\sigma^u(\eta)\,d\tau(u) \\
  &=\iiint \bip(\pi_x(f(\gamma,x,\eta))U_{(\gamma,x,s(x))}W_\eta
  Vh(s(\eta))(\gamma\inv
  \cdot x)|{k(x)})\Delta^H_\tau\neghalf(\eta)\Delta^{\Gu}_\mu\neghalf(\gamma,x) \\
  &\hspace{6 cm}
  d\lambda^{r(x)}(\gamma) \,d\sigma^{s(x)}(\eta) \,d\mu(x) \\
  &=\iiint \bip(\pi_x(f(\gamma,x,\eta))U_{(\gamma,x,\eta)}
  h(\gamma\inv \cdot x\cdot \eta)|{k(x)}) \theta\neghalf(\gamma\inv
  \cdot x,\eta)\Delta^{\Gu}_\mu\neghalf(\gamma,x) \\
  &\hspace{6 cm} \Delta^H_\tau\neghalf(\eta) d\lambda^{r(x)}(\gamma) \,d\sigma^{s(x)}(\eta)
  \,d\mu(x) \\
  &=\iiint \bip(\pi_x(f(\gamma,x,\eta))U_{(\gamma,x,\eta)}
  h(\gamma\inv \cdot x \cdot
  \eta)|{k(x)}) \Delta^{\Eu}_{\mu}\neghalf(\gamma,x,\eta)\\
  &\hspace{6 cm} \,d\lambda^{r(x)}(\gamma)
  \,d\sigma^{s(x)}(\eta) \,d\mu(x) \\
  &= \bip(\pi\rtimes U(f)h|k).
\end{align*}
Since this holds on a dense subset it holds everywhere and we get the
result.
\end{proof}


\def\noopsort#1{}\def\cprime{$'$} \def\sp{^}
\begin{bibdiv}
\begin{biblist}

\bib{arv:invitation}{book}{
      author={Arveson, William},
       title={An {I}nvitation to {$C\sp*$}-algebras},
   publisher={Springer-Verlag},
     address={New York},
        date={1976},
        note={Graduate Texts in Mathematics, No. 39},
      review={\MR{MR0512360 (58 \#23621)}},
}

\bib{brogoe:xx12}{unpublished}{
      author={Brown, Jonathan~H.},
      author={Goehle, Geoff},
       title={The equivariant {B}rauer semigroup for Groupoid
         dynamical systems},
     address={(preprint)},
        date={2012},
        note={	arXiv:1206.2064v1 [math.OA]},
}

\bib{bro:jot12}{article}{
      author={Brown, Jonathan~H.},
       title={Proper actions of groupoids on ${C}^*$-algebras},
        date={2012},
     journal={J. Operator Theory},
      volume={67},
      number={2},
       pages={437--467 },
}

\bib{fd:representations1}{book}{
      author={Fell, James M.~G.},
      author={Doran, Robert~S.},
       title={Representations of {$*$}-algebras, locally compact groups, and
  {B}anach {$*$}-algebraic bundles. {V}ol. 1},
      series={Pure and Applied Mathematics},
   publisher={Academic Press Inc.},
     address={Boston, MA},
        date={1988},
      volume={125},
        ISBN={0-12-252721-6},
        note={Basic representation theory of groups and algebras},
      review={\MR{90c:46001}},
}

\bib{fmw:pams04}{article}{
      author={Fulman, Igor},
      author={Muhly, Paul~S.},
      author={Williams, Dana~P.},
       title={Continuous-trace groupoid crossed products},
        date={2004},
        ISSN={0002-9939},
     journal={Proc. Amer. Math. Soc.},
      volume={132},
      number={3},
       pages={707\ndash 717 (electronic)},
      review={\MR{MR2019947 (2004k:46120)}},
}

\bib{goe:hjm10}{article}{
      author={Goehle, Geoff},
       title={The {M}ackey machine for crossed products by regular groupoids.
  {I}},
        date={2010},
        ISSN={0362-1588},
     journal={Houston J. Math.},
      volume={36},
      number={2},
       pages={567\ndash 590},
      review={\MR{2661261 (2011i:47111)}},
}

\bib{goe:imj09}{article}{
      author={Goehle, Geoff},
       title={Locally unitary groupoid crossed products},
       journal={Indiana Univ. Math. J.}
       volume={60}
        date={2011},
        pages={411\ndash 442}
       }
       
\bib{hrw:tams00}{article}{
      author={Huef, Astrid~an},
      author={Raeburn, Iain},
      author={Williams, Dana~P.},
       title={An equivariant {B}rauer semigroup and the symmetric imprimitivity
  theorem},
        date={2000},
        ISSN={0002-9947},
     journal={Trans. Amer. Math. Soc.},
      volume={352},
      number={10},
       pages={4759\ndash 4787},
      review={\MR{2001b:46107}},
}

\bib{kmrw:ajm98}{article}{
      author={Kumjian, Alexander},
      author={Muhly, Paul~S.},
      author={Renault, Jean~N.},
      author={Williams, Dana~P.},
       title={The {B}rauer group of a locally compact groupoid},
        date={1998},
        ISSN={0002-9327},
     journal={Amer. J. Math.},
      volume={120},
      number={5},
       pages={901\ndash 954},
      review={\MR{2000b:46122}},
}

\bib{mrw:jot87}{article}{
      author={Muhly, Paul~S.},
      author={Renault, Jean~N.},
      author={Williams, Dana~P.},
       title={Equivalence and isomorphism for groupoid {$C^*$}-algebras},
        date={1987},
        ISSN={0379-4024},
     journal={J. Operator Theory},
      volume={17},
      number={1},
       pages={3\ndash 22},
      review={\MR{88h:46123}},
}

\bib{muh:cbms}{techreport}{
      author={Muhly, Paul~S.},
       title={Coordinates in operator algebra},
 institution={CMBS Conference Lecture Notes (Texas Christian University 1990)},
        date={1999},
        note={In continuous preparation},
}

\bib{muhwil:dm08}{article}{
      author={Muhly, Paul~S.},
      author={Williams, Dana~P.},
       title={Equivalence and disintegration theorems for {F}ell bundles and
  their {$C\sp *$}-algebras},
        date={2008},
        ISSN={0012-3862},
     journal={Dissertationes Math. (Rozprawy Mat.)},
      volume={456},
       pages={1\ndash 57},
      review={\MR{MR2446021}},
}

\bib{muhwil:nyjm08}{book}{
      author={Muhly, Paul~S.},
      author={Williams, Dana~P.},
       title={Renault's equivalence theorem for groupoid crossed products},
      series={NYJM Monographs},
   publisher={State University of New York University at Albany},
     address={Albany, NY},
        date={2008},
      volume={3},
        note={Available at http://nyjm.albany.edu:8000/m/2008/3.htm},
}

\bib{ram:am71}{article}{
      author={Ramsay, Arlan},
       title={Virtual groups and group actions},
        date={1971},
     journal={Advances in Math.},
      volume={6},
       pages={253\ndash 322 (1971)},
      review={\MR{43 \#7590}},
}

\bib{ren:groupoid}{book}{
      author={Renault, Jean~N.},
       title={A groupoid approach to {\cs}-algebras},
      series={Lecture Notes in Mathematics},
   publisher={Springer-Verlag},
     address={New York},
        date={1980},
      volume={793},
}

\bib{ren:jot87}{article}{
      author={Renault, Jean~N.},
       title={Repr\'esentation des produits crois\'es d'alg\`ebres de
  groupo\"\i des},
        date={1987},
        ISSN={0379-4024},
     journal={J. Operator Theory},
      volume={18},
      number={1},
       pages={67\ndash 97},
      review={\MR{MR912813 (89g:46108)}},
}

\bib{rud:real}{book}{
      author={Rudin, Walter},
       title={Real and complex analysis},
   publisher={McGraw-Hill},
     address={New York},
        date={1987},
}

\bib{rw:morita}{book}{
      author={Raeburn, Iain},
      author={Williams, Dana~P.},
       title={Morita equivalence and continuous-trace {$C^*$}-algebras},
      series={Mathematical Surveys and Monographs},
   publisher={American Mathematical Society},
     address={Providence, RI},
        date={1998},
      volume={60},
        ISBN={0-8218-0860-5},
      review={\MR{2000c:46108}},
}

\bib{wil:crossed}{book}{
      author={Williams, Dana~P.},
       title={Crossed products of {$C{\sp \ast}$}-algebras},
      series={Mathematical Surveys and Monographs},
   publisher={American Mathematical Society},
     address={Providence, RI},
        date={2007},
      volume={134},
        ISBN={978-0-8218-4242-3; 0-8218-4242-0},
      review={\MR{MR2288954 (2007m:46003)}},
}

\end{biblist}
\end{bibdiv}

\end{document}